\documentclass[12pt,fleqn]{article}
\usepackage{authblk}
\usepackage{orcidlink}
\usepackage[misc]{ifsym}

\usepackage{mathtools} 

\usepackage[utf8]{inputenc}
\usepackage{paralist}
\usepackage[english]{babel}

\usepackage{amsthm}
\usepackage{amssymb}
\usepackage{amsmath}
\usepackage{paralist}
\usepackage{hyperref}

\def\IN{{\mathbb N}}

\def\dom{\mathop{\rm dom}}

\def\SigmaS{\Sigma^{\ast}} 
\def\SigmaN{\Sigma^{\IN}} 


\newtheorem{theorem}{Theorem}[section]
\newtheorem{lemma}[theorem]{Lemma}
\newtheorem{proposition}[theorem]{Proposition}
\newtheorem{corollary}[theorem]{Corollary}
\newtheorem*{theorem-main}{Theorem \ref{theorem:main-eq}}

\theoremstyle{definition}
\newtheorem{definition}[theorem]{Definition}
\newtheorem{example}[theorem]{Example}
\newtheorem{examples}[theorem]{Examples}

\theoremstyle{remark}
\newtheorem{remark}[theorem]{Remark}


\begin{document}
	
	\title{Strong Kurtz Randomness and Binary Expansions of Reordered Computable Numbers}
	\author{Peter Hertling
		\href{mailto:peter.hertling@unibw.de}{\Letter}
		\orcidlink{0000-0002-4442-6711}
	}
	\author{Philip Janicki
		\href{mailto:philip.janicki@unibw.de}{\Letter}
		\orcidlink{0009-0008-9063-028X}
	}
	\affil{Fakultät für Informatik \\
		Universität der Bundeswehr München \\
	}
	\date{\today}
	\maketitle

\begin{abstract}
A real number is called \emph{left-computable} if there exists a computable increasing sequence of rational numbers converging to it. In this article we investigate the Kolmogorov complexity and the binary expansions of a very specific subset of the left-computable numbers. We show in our main result that a real number is reordered computable if, and only if, it is left-computable and not strongly Kurtz random. In preparation of this, we characterize strong Kurtz randomness by a suitable notion of randomness tests.

We also look at the binary expansions of reordered computable numbers and clarify whether they can be immune, hyperimmune, hyperhyperimmune, strongly hyperhyperimmune, or cohesive.
Then, we investigate the effective Hausdorff and packing dimensions of reordered computable numbers. Finally, we have a short look at regular reals in the context of immunity properties, Kolmogorov complexity and (strong) Kurtz randomness.
\end{abstract}
		
{\bf Keywords:}
left-computable numbers, reordered computable numbers, strongly Kurtz random, Kolmogorov complexity, regular reals, immune sets, hyperimmune sets, hyperhyperimmune sets, cohesive sets, effective Hausdorff dimension, effective packing dimension, infinitely often K-trivial
	
\maketitle

\section{Introduction}\label{section:introduction}

In contrast to classical computability theory, computable analysis focuses on the computability of real-valued functions and the effectiveness of real numbers. There are three important subsets of the real numbers, which are also mentioned in this article.
A real number is called \emph{computable} if there exists a computable sequence of rational numbers converging computably to it. A real number is called \emph{left-computable} if there exists a computable increasing sequence of rational numbers converging to it. 
A real number is called \emph{computably approximable} if there exists a computable sequence of rational numbers converging to it. 
Every computable number is left-computable and every left-computable number is computably approximable, but none of these implications can be reversed.
The left-computable numbers play a very important role in algorithmic information theory. In this area, the real numbers are often represented by dyadic series. In this article we are interested in a specific subset of the left-computable numbers, namely the \emph{reordered computable} numbers, which have been introduced by the second author \cite{Jan2024}.

This article consists of seven sections. After the introduction in Section~\ref{section:introduction} and some preliminaries in Section \ref{section:preliminaries}, Section~\ref{section:strongKurtzrandom} is about strong Kurtz randomness. This randomness notion was introduced by Stephan and Wu~\cite{SW2005}, is defined in terms of prefix-free Kolmogorov complexity and is strictly between the well-known notions of Martin-Löf randomness and Kurtz randomness. We will give a characterization of strong Kurtz randomness by suitable randomness tests. After this, we will present our main result in Section~\ref{section:mainresult}. We will show that the reordered computable numbers are exactly the left-computable numbers which are not strongly Kurtz random. This result is interesting, since both concepts have been developed independently. The reordered computable numbers were defined in the context of computable analysis, while strong Kurtz randomness was defined in the context of algorithmic information theory.

Recently \cite{HJ2025}, the authors looked at the binary expansions of reordered computable numbers and clarified whether or not they can be immune or hyperimmune. In Section~\ref{section:binaryexpansions} we take this topic up again and extend our analysis to the notions hyperhyperimmune, strongly hyperhyperimmune, and cohesive.
It turns out that by using our main theorem and a number of well-known facts we can obtain a complete picture of the situation, obtaining shorter proofs of results in \cite{HJ2025} and new results. 

In the following short section~\ref{section:effectivedimension} we consider the question what are the possible effective dimensions that a reordered computable number can have. In Section~\ref{section:regular}, we have a short look at regular reals in the context of immunity properties, Kolmogorov complexity and (strong) Kurtz randomness. The regular reals, introduced by Wu \cite{Wu2005}, form a proper subset of the reordered computable numbers. We summarize the known results and additionally deduce from two old results that regular reals cannot be bi-immune. Furthermore, we show that regular reals are infinitely often $K$-trivial.

We thank the participants of the conference CiE 2025 in Lisbon whose questions after the presentation of 	\cite{HJ2025} by the second author motivated our further work on reordered computable numbers and in particular on the questions treated in Sections \ref{section:binaryexpansions} and \ref{section:effectivedimension}.

\section{Preliminaries}\label{section:preliminaries}

Let $\IN=\{0,1,2,\ldots\}$ be the set of natural numbers. 
For $i,j\in\IN$ we define $\langle i,j\rangle:= \frac{(i+j)(i+j+1)}{2}+j$.
Then $\langle\cdot,\cdot\rangle:\IN^2\to\IN$ is a computable bijection.
Let $\Sigma:=\{0,1\}$ be the binary alphabet. 
By $\SigmaS$ we denote the set of all binary strings.
By $\SigmaN$ we denote the set of all binary sequences.
Every binary sequence $x\in\SigmaN$ is a binary expansion of the real number $0.x=\sum_{i\in\IN} x(i)\cdot 2^{-(i+1)}$ in the interval $[0,1]$.
Quite often we will silently identify a binary sequence with the real number it represents and simply call binary sequences \emph{reals}.
For a binary string $\tau\in\SigmaS$ we sometimes write $0.\tau$ instead of $0.\tau0^\omega$.
And we let $I_\tau:=\{x\in [0,1] \mid 0.\tau0^\omega \leq x \leq 0.\tau1^\omega\}$ be the interval consisting of all real numbers in the unit interval
that have a binary representation starting with $\tau$.

For a set $A\subseteq\IN$ we write $\overline{A}$ for its complement $\IN\setminus A$.
For a set $A\subseteq\IN$ and a number $n\in\IN$ we write as usual $A\restriction n := A \cap\{0,\ldots,n-1\}$.
Sometimes, in order to simplify the notation, we will identify subsets of the natural numbers with binary sequences.
For example, we call $A\restriction n$ the \emph{prefix of length $n$ of $A$}.
It is well-known that a set $A\subseteq\IN$ is computable if and only if the real number $x_A:= \sum_{j\in A} 2^{-(j+1)}$ is computable. If $A$ is only assumed to be computably enumerable, then $x_A$ is a left-computable number; the converse is not true as pointed out by Jockusch (see Soare~\cite{Soa1969}). We say that a real number $x \in \left[0,1\right]$ is \emph{strongly left-computable} if there exists a computably enumerable set $A \subseteq \IN$ with $x_A = x$.

For any function $f \colon {\subseteq\IN}\to\IN$ that has either a finite domain or tends to infinity, we can define the function $u_f:\IN\to\IN$ by 
\[u_f(m):=\left|\{k\in\dom(f)\mid f(k)=m\}\right|.\]
Note that if $f \colon \IN\to\IN$ is a function such that the series $\sum_k 2^{-f(k)}$ converges, then $f$ tends to infinity.
For any real number $x > 0$ we will call a function $f \colon \IN \to \IN$ with $\sum_{k=0}^{\infty} 2^{-f(k)} = x$ a \emph{name} of $x$. Note that $x$ is left-computable if, and only if, $x$ has a computable name.

A sequence $(a_n)_n$ of real numbers is said to \emph{converge computably} if it converges and if there exists a computable function $g:\IN\to\IN$ such that, for all $m,n$, the assumption $m\geq g(n)$ implies $\left|a_m - \lim_{k\to\infty}a_k\right|\leq 2^{-n}$.

A computable function $M \colon {\subseteq\SigmaS} \to \SigmaS$ with prefix-free domain will be called a \emph{prefix-free machine}.
If $M$ is a prefix-free machine, then for any $\tau\in\SigmaS$ we define $K_M(\tau):=\min\{|\sigma| \mid \sigma\in\dom(M) \land M(\sigma)=\tau\}$, where $\min(\emptyset)=\infty$.
A prefix-free machine $U$ is called \emph{optimal} if for every prefix-free machine there exists a constant $c\in\IN$ with $K_U(\tau)\leq K_M(\tau)+c$ for all $\tau\in\SigmaS$. It is well-known that there exists an optimal prefix-free machine.
For the remainder of this article we fix some optimal prefix-free machine $U$ and write $K(\tau)$ for $K_U(\tau)$.
For any binary string $\tau$ we call $K(\tau)$ its \emph{Kolmogorov complexity}.

Let $(W_e)_e$ be a standard enumeration of all computably enumerable subsets of $\IN$, and let $\nu_{\SigmaS} \colon \IN\to\SigmaS$ be the bijection that enumerates the binary strings in length-lexicographical order.
Then $(V_e)_e$ defined by $V_e:=\{\nu_{\SigmaS}(n)\mid n\in W_e\}$ is a standard enumeration of all computably enumerable sets of binary strings.

\section{Strong Kurtz Randomness}
\label{section:strongKurtzrandom}

One of the most important topics in algorithmic information theory is the study of randomness of binary sequences. Perhaps the most important and central randomness notion in this field is Martin-Löf randomness. We review the definition first.

\begin{definition}
\begin{enumerate}
\item
We call a function $f \colon \IN \to \IN$ a \emph{Martin-L{\"o}f test} if $f$ is computable and
$\sum_{\tau\in V_{f(n)}} 2^{-|\tau|} \leq 2^{-n}$, for all $n$.
\item
We say that a Martin-L{\"o}f test $f:\IN\to\IN$ \emph{covers} a binary sequence $x \in \SigmaN$ if for every $n$ the set $V_{f(n)}$ contains a prefix of $x$.
\item
A binary sequence $x \in \SigmaN$ is called \emph{Martin-L{\"o}f random} if there is no Martin-L{\"o}f test that covers $x$.
\end{enumerate}
\end{definition}

It is well-known that a binary sequence $x \in \SigmaN$ is Martin-L{\"o}f random if, and only if, there exists a constant $c \in \IN$ with $K(x \restriction n) > n - c$ for every $n \in \IN$. There are many other randomness notions which are strictly weaker than Martin-Löf randomness. One prominent example is Kurtz randomness, which will be reviewed at the end of this section as well. Stephan and Wu~\cite[Definition 2]{SW2005} introduced the following randomness notion as a strengthening of Kurtz randomness.

\begin{definition}[Stephan, Wu \cite{SW2005}]
A binary sequence $x \in \SigmaN$ is called \emph{strongly Kurtz random} if there is no computable function $r\colon \IN\to\IN$ with
\begin{equation*}
K(x \restriction r(n)) \leq r(n) - n
\end{equation*}
for every $n \in \IN$.
\end{definition}

\begin{remark}\label{remark:lnc}
	Every Martin-L{\"o}f random binary sequence is strongly Kurtz random, but the converse is not true. The real numbers that are both left-computable and nearly computable but not computable serve as counterexamples. As in~\cite{HJ2024}, we call a sequence of real numbers $(x_n)_n$ \emph{nearly computably convergent} if it converges and, for every computable increasing function $s \colon \IN \to \IN$, the sequence $\left(x_{s(n+1)} - x_{s(n)}\right)_n$ converges computably to $0$, and we call a real number \emph{nearly computable} if there exists a computable sequence of rational numbers which converges nearly computably to it. Every computable number is nearly computable, but the converse is not true. It follows from a theorem by Downey and LaForte \cite[Theorem 3]{DL2002} that there exists a left-computable number which is nearly computable but not computable. Stephan and Wu \cite{SW2005} showed that these numbers are strongly Kurtz random but not Martin-L{\"o}f random.
\end{remark}

Strong Kurtz randomness can also be described by a suitable notion of a randomness test.

\begin{definition}
We call a function $f \colon \IN \to \IN$ a \emph{strong Kurtz test} if it is a Martin-L{\"o}f test and for every $n$ all strings in $V_{f(n)}$ have the same length.
\end{definition}

Of course, if $f$ is a strong Kurtz test then the set $V_{f(n)}$ is finite for every $n\in\IN$.

\begin{theorem}
\label{satz:non-strongly-kurtz-random-strong-kurtz-test}
A binary sequence is strongly Kurtz random if, and only if, it is not covered by any strong Kurtz test.
\end{theorem}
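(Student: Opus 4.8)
The plan is to prove the equivalence in contrapositive form in both directions: I will show that a binary sequence $x$ is \emph{not} strongly Kurtz random if, and only if, it \emph{is} covered by some strong Kurtz test. For the first implication, assume $r\colon\IN\to\IN$ is computable with $K(x\restriction r(n))\le r(n)-n$ for all $n$, and let $V_{g(n)}$ be the set $\{\tau\in\Sigma^{r(n+1)}\mid K(\tau)\le r(n+1)-(n+1)\}$; this can be arranged with $g$ computable, since ``$K(\tau)\le m$'' is computably enumerable uniformly in $\tau$ and $m$ and $r$ is computable. All strings in $V_{g(n)}$ have length $r(n+1)$, and since fewer than $2^{r(n+1)-(n+1)+1}=2^{r(n+1)-n}$ strings have Kolmogorov complexity at most $r(n+1)-(n+1)$, we get $\sum_{\tau\in V_{g(n)}}2^{-|\tau|}<2^{-n}$; so $g$ is a strong Kurtz test. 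Moreover $x\restriction r(n+1)$ has length $r(n+1)$ and, by assumption, Kolmogorov complexity at most $r(n+1)-(n+1)$, hence lies in $V_{g(n)}$, and so $g$ covers $x$. (The shift $n\mapsto n+1$ is there only so that the resulting bound is $2^{-n}$, as the test definition requires, rather than $2^{-n+1}$.)

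For the converse, let $f$ be a strong Kurtz test covering $x$, and write $\ell(n)$ for the common length of the strings in $V_{f(n)}$. Because $f$ covers $x$, every $V_{f(n)}$ is nonempty, so $\ell$ is a total computable function (enumerate $V_{f(n)}$ until its first string appears and read off its length), and the prefix of $x$ lying in $V_{f(n)}$ is precisely $x\restriction\ell(n)$. The test condition $\sum_{\tau\in V_{f(n)}}2^{-|\tau|}\le2^{-n}$ forces $|V_{f(n)}|\le2^{\ell(n)-n}$. I would then apply the Kraft--Chaitin (machine existence) theorem to the request set that, for every $n\ge1$ and every $\tau\in V_{f(n)}$, asks for a description of $\tau$ of length $\ell(n)-n+2\lceil\log(n+1)\rceil$; these requests can be enumerated effectively since $f$ is computable, and the total weight requested is at most $\sum_{n\ge1}2^{\ell(n)-n}\cdot2^{-(\ell(n)-n)-2\lceil\log(n+1)\rceil}=\sum_{n\ge1}2^{-2\lceil\log(n+1)\rceil}<1$. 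The resulting prefix-free machine together with the optimality of $U$ gives a constant $c$ with $K(x\restriction\ell(n))\le\ell(n)-n+2\lceil\log(n+1)\rceil+c$ for all $n\ge1$.

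The one non-routine point is to absorb this $O(\log n)$ overhead. One cannot simply take $r(n):=\ell(n)$: that would demand $K(x\restriction\ell(n))\le\ell(n)-n$ exactly, and $\ell(n)=n$ is possible (e.g.\ $V_{f(n)}=\{x\restriction n\}$ for a computable $x$), whereas $K(x\restriction n)\le n-n$ cannot hold for all $n$. Instead I would \emph{speed up}: setting $h(n):=2\lceil\log(n+1)\rceil+c$, the map $m\mapsto m-h(m)$ is unbounded, so $\phi(n):=\min\{m\ge1\mid m-h(m)\ge n\}$ is a total computable function, and $r:=\ell\circ\phi$ is then computable with $K(x\restriction r(n))=K(x\restriction\ell(\phi(n)))\le\ell(\phi(n))-\phi(n)+h(\phi(n))\le\ell(\phi(n))-n=r(n)-n$ for every $n$. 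Hence $r$ witnesses that $x$ is not strongly Kurtz random, and combining the two implications contrapositively proves the theorem. The main obstacle, as flagged, is to notice that a strong Kurtz test yields only $K(x\restriction\ell(n))\le\ell(n)-n+O(\log n)$ and that this suffices, after composition with a sufficiently fast-growing computable function, to produce the sharp bound $K(x\restriction r(n))\le r(n)-n$ demanded by the definition.
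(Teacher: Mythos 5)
Your proof is correct and follows essentially the same route as the paper's: one direction collects the strings of length $r(n+1)$ with complexity at most $r(n+1)-(n+1)$ into a uniformly c.e.\ family and counts them, and the other converts a strong Kurtz test into short descriptions via the Kraft--Chaitin theorem. The only difference is how the additive overhead is absorbed in the Kraft--Chaitin direction: the paper thins the test to $f(2n+1)$ so that requests of length exactly $|\tau|-n$ already have total weight at most $1$ and then shifts the index by the optimality constant $c$, whereas you pad the request lengths by $2\lceil\log(n+1)\rceil$ and then compose with a computable speed-up $\phi$ --- both devices are standard and both work.
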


\begin{proof}
Let $x \in \SigmaN$.
       
$\implies$: Suppose that $f$ is a strong Kurtz test covering $x$.
Define the computable function $g \colon \IN \to \IN$ by $g(n) := f(2n+1)$ for all $n \in \IN$. Then, we have
\begin{align*}
      \sum_{n \in \IN}^{} \sum_{\tau \in V_{g(n)}}^{} 2^{-(\left|\tau\right| - n)}
      =& \sum_{n \in \IN}^{} 2^n \cdot \sum_{\tau \in V_{f(2n+1)}}^{} 2^{-\left|\tau\right|} \\
       &\leq \sum_{n \in \IN}^{} 2^n \cdot 2^{-(2n+1)} \\
       &= \sum_{n \in \IN}^{} 2^{-(n+1)} \\
       &= 1.
\end{align*}
By the effective Kraft-Chaitin theorem~\cite[Theorem 2.2.17]{Nie2009}, there exists a prefix-free machine $M$ such that, for every $n \in \IN$ and every $\tau \in V_{g(n)}$, there is some $\sigma \in \dom(M)$ with $M(\sigma) = \tau$ and $\left|\sigma\right| = \left|\tau\right| - n$.
Let $c \in \IN$ be a constant with $K(\tau) \leq K_M(\tau) + c$ for all $\tau\in \SigmaS$.
Let $r\colon\IN\to\IN$ be the function defined by $r(n):=|\tau|$ for any $\tau\in V_{g(n+c)}$, for all $n$. This function is computable.
Let us consider some $n\in\IN$ and set $\tau:=x \restriction r(n)$.
Then $\tau\in V_{g(n+c)}$. Let $\sigma \in \dom(M)$ be some string with $M(\sigma) = \tau$ and $\left|\sigma\right| = \left|\tau\right| - (n+c)$. We obtain
\begin{align*}
			K(x \restriction r(n))
      &\leq K_M(x \restriction r(n)) + c \\
      &= K_M(\tau) + c \\
      &\leq \left|\sigma\right| + c \\
      &= \left|\tau\right| - (n+c) + c \\
      &= r(n) - n,
\end{align*}
showing that $x$ is not strongly Kurtz random.
              
$\impliedby$:
Now suppose that $x$ is not strongly Kurtz random. Fix some computable function $r \colon \IN \to \IN$ such that $K(x \restriction r(n)) \leq r(n) - n$ for every $n \in \IN$.
Note that this implies $r(n)>n$ for all $n$.
Let $f:\IN\to\IN$ be a computable function such that
\begin{equation*}
     V_{f(n)} := \{ U(\sigma) \mid \sigma \in\dom(U) \land \left|U(\sigma)\right| = r(n) \land \left|\sigma\right| \leq r(n) - n \},
\end{equation*}
for all $n\in\IN$. We claim that $f$ is a strong Kurtz test covering $x$.
It is clear that $|\tau|=r(n)$ for all $\tau\in V_{f(n)}$ and $n\in\IN$.
Furthermore, for any $n$ the set $\dom(U)$ contains less than $2^{r(n)-n}$ strings of length at most $r(n)-n$. Hence, the set $V_{f(n)}$ contains less than $2^{r(n)-n}$ strings.
We obtain
\begin{equation*}
      \sum_{\tau \in V_{f(n)}} 2^{-\left|\tau\right|} < 2^{r(n)-n} \cdot 2^{-r(n)} = 2^{-n}
\end{equation*}
for every $n \in \IN$.
Finally, let us consider some $n \in \IN$. For $\tau:=x \restriction r(n)$ we have $|\tau|=r(n)$ and by assumption $K(\tau) \leq r(n)-n$. This implies $\tau\in V_{f(n)}$.
Therefore, $f$ is a strong Kurtz test covering $x$.
\end{proof}

\begin{definition}
A binary sequence $x\in\SigmaN$ is called \emph{Kurtz random}\footnote{These sequences are also called \emph{weakly 1-random} \cite{DH2010} or \emph{weakly random} \cite{Nie2009}.} if every computably enumerable, prefix-free set $S\subseteq\SigmaS$ with $\sum_{\sigma\in S} 2^{-|\sigma|}=1$ contains a prefix of $x$.
\end{definition}

By modifying slightly the Kurtz null tests by Wang~\cite{Wan1996} one obtains the following characterization.

\begin{lemma}
A binary sequence $x\in\SigmaN$ is not Kurtz random if, and only if, there is a strong Kurtz test $f$ covering $x$ such that
the function $n\mapsto |V_{f(n)}|$ is computable. 
\end{lemma}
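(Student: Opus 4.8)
The plan is to prove both implications directly from the stated definition of Kurtz randomness, i.e.\ via computably enumerable prefix-free sets $S\subseteq\SigmaS$ with $\sum_{\sigma\in S}2^{-|\sigma|}=1$; the key point is to go back and forth between such an $S$ and a strong Kurtz test whose cardinality function is computable.

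For the direction ``not Kurtz random $\implies$ such a test exists'', fix a c.e.\ prefix-free $S$ with $\sum_{\sigma\in S}2^{-|\sigma|}=1$ that has no prefix of $x$. The partial sums of this series form a computable nondecreasing sequence of rationals with limit $1$, so for each $n$ we can compute a stage and thereby a finite prefix-free set $S_n\subseteq S$ with $\sum_{\sigma\in S_n}2^{-|\sigma|}\ge 1-2^{-n}$. Set $L_n:=\max\{|\sigma|\mid\sigma\in S_n\}$ and let $V_{f(n)}$ be the (finite, uniformly computable) set of all strings $\tau$ of length $L_n$ having no prefix in $S_n$. Then every string of $V_{f(n)}$ has length $L_n$, the function $n\mapsto|V_{f(n)}|$ is computable, and $\sum_{\tau\in V_{f(n)}}2^{-|\tau|}=1-\sum_{\sigma\in S_n}2^{-|\sigma|}\le 2^{-n}$, so $f$ is a strong Kurtz test; and since $S\supseteq S_n$ has no prefix of $x$, the string $x\restriction L_n$ lies in $V_{f(n)}$, so $f$ covers $x$. (One uses the s-m-n theorem to realize the sets $V_{f(n)}$ via a computable $f$.)

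For the converse, let $f$ be a strong Kurtz test covering $x$ with $n\mapsto|V_{f(n)}|$ computable. Since $f$ covers $x$, each $V_{f(n)}$ is nonempty, so knowing $|V_{f(n)}|$ we can compute $V_{f(n)}$ and its common string length $\ell_n$; moreover $|V_{f(n)}|\le 2^{\ell_n-n}$ forces $\ell_n\ge n$, and by an effective choice of subsequence we may assume $\ell_0<\ell_1<\cdots$ while retaining all properties. Define $T_n$ to be the set of strings $\tau$ of length $\ell_n$ with $\tau\restriction\ell_m\in V_{f(m)}$ for all $m<n$ but $\tau\notin V_{f(n)}$, and put $S:=\bigcup_n T_n$. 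Then $S$ is computable (hence c.e.), $S$ is prefix-free (a string in $T_n$ would be the level-$\ell_n$ restriction of any string in a later $T_{n'}$ extending it, forcing it into $V_{f(n)}$, contradiction), and $S$ has no prefix of $x$ because $x\restriction\ell_n\in V_{f(n)}$. Finally, $\sum_{\sigma\in S}2^{-|\sigma|}=1$ by a telescoping argument: the total measure of the level-$\ell_n$ strings whose restrictions to $\ell_0,\dots,\ell_n$ all lie in the respective $V_{f(m)}$ equals that of the corresponding level-$\ell_{n+1}$ set, and the measure of $T_n$ is the difference of the level-$\ell_n$ and level-$\ell_{n+1}$ such quantities; since $V_{f(n)}$ has measure at most $2^{-n}\to 0$, the partial sums converge to $1$.

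The main obstacle is the converse: arranging that $S$ has measure \emph{exactly} $1$ while staying prefix-free and, crucially, c.e. The hypothesis that $n\mapsto|V_{f(n)}|$ is computable is precisely what makes this possible, since it turns membership in $V_{f(n)}$ into a decidable predicate, so that the ``complement-at-each-level among the survivors of all earlier levels'' construction of $T_n$ is computable rather than merely co-c.e.; isolating the right telescoping sequence of clopen sets is then routine. This is essentially Wang's Kurtz null test construction adapted to the equal-length setting.
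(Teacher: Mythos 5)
Your proof is correct; the paper itself gives no proof of this lemma, only the remark that it follows by slightly modifying Wang's Kurtz null tests, and your two constructions (complementing a large finite prefix-free portion $S_n$ of $S$ at its maximal length $L_n$, and the telescoping ``survivors of all earlier levels'' sets $T_n$, made decidable by the computable cardinality function) are exactly that standard modification, with all the key points --- prefix-freeness of $\bigcup_n T_n$, the exact total measure $1$, and the effective passage to strictly increasing lengths $\ell_n$ --- handled correctly. The only cosmetic gap is the vacuous case $n=0$, where $S_0$ could be empty so that $L_0=\max\emptyset$ is undefined; taking $L_0:=0$ (so $V_{f(0)}=\{\varepsilon\}$, which still has measure at most $2^{-0}$) or simply enumerating at least one element of $S$ into $S_0$ fixes this.
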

	
Thus, any strongly Kurtz random binary sequence is indeed Kurtz random. The converse is not true.

\begin{proposition}
There exists a Kurtz random binary sequence that is not strongly Kurtz random.
\end{proposition}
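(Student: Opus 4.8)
The plan is to build, by a finite--injury priority construction, a binary sequence $x$ together with a strong Kurtz test $f$ which is computable and covers $x$, while no strong Kurtz test whose cardinality function $n\mapsto|V_{f(n)}|$ is computable covers $x$. By Theorem~\ref{satz:non-strongly-kurtz-random-strong-kurtz-test} the first property shows that $x$ is not strongly Kurtz random, and by the preceding lemma the second property shows that $x$ is Kurtz random (note that, again by the lemma, the test $f$ we construct must itself have a non-computable cardinality function, but that is automatic). Fix a computable enumeration $(\hat T_e)_e$ of all pairs consisting of a candidate strong Kurtz test $(\hat V^e_m)_m$ and a candidate computable cardinality function $h_e$ for it; if such a pair is ill-formed --- the test function is not total, the strings inside some $\hat V^e_m$ fail to have a common length, $h_e$ is not total, or $h_e(m)\neq|\hat V^e_m|$ for some $m$ --- then the requirement below attached to it will simply never act and will harm nothing.

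We maintain a finite binary string, the \emph{commitment}, whose nested limit will be $x$, and we work on requirements in the priority order $N_0,B_0,N_1,B_1,\dots$ (together, if one wishes, with low--priority padding requirements ensuring the commitment length tends to infinity; alternatively, infinitely many $N_e$ do act anyway). Requirement $B_m$ is responsible for one level of the test we build: putting $L_m:=2m+3$, at every stage at which the commitment has length at least $L_m$ it throws the restriction of the commitment to its first $L_m$ bits into $V_{f(m)}$, so that all strings in $V_{f(m)}$ have length $L_m$. Requirement $N_e$ aims to keep $\hat T_e$ from covering $x$: it picks a fresh witness $m_e$ larger than all lengths currently reserved by higher--priority requirements, and then waits until $h_e(m_e)$ halts and $\hat V^e_{m_e}$ has been \emph{fully} enumerated, i.e.\ has exactly $h_e(m_e)$ strings, all of one common length $L^\ast\geq m_e$. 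If this never happens, $\hat T_e$ is ill-formed and $N_e$ waits forever, obstructing no one, since a requirement that is merely waiting does not request attention --- this is the one point where using the \emph{computable} cardinality function is essential, as it tells us when to stop waiting. Once $\hat V^e_{m_e}$ is seen complete, then, because $|\hat V^e_{m_e}|\leq 2^{L^\ast-m_e}$ while the commitment has some length $k<m_e$, strictly more than $0$ of the $2^{L^\ast-k}$ extensions of the commitment to length $L^\ast$ avoid $\hat V^e_{m_e}$, so $N_e$ redirects the commitment to such an extension $\sigma'$; from then on $x\restriction L^\ast=\sigma'\notin\hat V^e_{m_e}$, so $\hat T_e$ does not cover $x$. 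A redirection of $N_e$ injures lower--priority requirements; by the usual estimate each $N_e$ acts at most $2^e$ times, so every requirement acts finitely often and every level $N_e$ with $\hat T_e$ well formed is eventually satisfied.

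It remains to check that $f$ is a genuine strong Kurtz test. By the priority arrangement, once $B_m$ has recorded its first string the value $x\restriction L_m$ can be changed only by a subsequent action of one of $N_0,\dots,N_m$, since a requirement $N_e$ with $e>m$ respects $B_m$ and never alters the first $L_m$ bits; hence $|V_{f(m)}|\leq 1+\sum_{e\leq m}2^{e}<2^{m+2}$, so $\sum_{\tau\in V_{f(m)}}2^{-|\tau|}<2^{m+2}\cdot 2^{-(2m+3)}=2^{-m-1}\leq 2^{-m}$, and $V_{f(m)}$ always contains the final value of $x\restriction L_m$, which is a prefix of $x$, so $f$ is a strong Kurtz test covering $x$. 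The main obstacle is precisely this bookkeeping: choosing the priorities, the witnesses $m_e$, and the block lengths $L_m$ so that the injury inflicted on each $B_m$ stays within the budget that keeps $V_{f(m)}$ of measure at most $2^{-m}$, while at the same time making sure that an ill-formed candidate $\hat T_e$ --- which we cannot recognize --- stalls only its own requirement and not the construction; the latter is handled by the device of letting $N_e$ merely wait, never requesting attention, until it sees a completed $\hat V^e_{m_e}$.
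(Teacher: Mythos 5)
Your argument is correct in outline, but it takes a genuinely different route from the paper. The paper disposes of the proposition in two lines by citing a known result: there is a left-computable real that is partial computably random yet has initial-segment complexity $K(x\restriction n)\leq \log(n)^2+c$; the complexity bound immediately violates strong Kurtz randomness, while partial computable randomness implies Kurtz randomness. You instead give a self-contained finite-injury construction that exploits precisely the two test characterizations developed in this section: you build a strong Kurtz test covering $x$ (so $x$ is not strongly Kurtz random by Theorem~\ref{satz:non-strongly-kurtz-random-strong-kurtz-test}) while diagonalizing against every strong Kurtz test equipped with a computable cardinality function (so $x$ is Kurtz random by the lemma characterizing Kurtz randomness). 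The trade-off is clear: the paper's proof is shorter and yields a \emph{left-computable} witness, which is more information; yours is elementary, avoids the machinery of partial computable randomness, and illustrates why the computable cardinality function is exactly the handle that separates the two test notions. One place where your write-up needs care: you say $N_e$ acts ``while the commitment has some length $k<m_e$,'' but the commitment must grow unboundedly while $N_e$ waits (possibly forever, if $\hat T_e$ is ill-formed), so what is really true is that $N_e$ truncates back to the prefix protected by higher-priority requirements, whose length is below $m_e$ because $m_e$ was chosen fresh and any intervening higher-priority action would have reset it; relatedly, $m_e$ should be required to exceed the \emph{a priori} block lengths $L_0,\dots,L_{e-1}$ as well as the dynamic reservations, so that $N_e$ never disturbs $B_m$ for $m<e$. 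With those standard adjustments the counting $|V_{f(m)}|\leq 1+\sum_{e\leq m}2^e$ and the measure bound go through as you state them.
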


\begin{proof}
It is known \cite[Remark 7.4.17]{Nie2009} that there exists a left-computable real $x$ which is partial computably random with $K(x \restriction n) \leq \log(n)^2 + c$ for some $c \in \IN$ and for all $n \in \IN$. Hence, $x$ is not strongly Kurtz random. But partial computably random sequences are Kurtz random~\cite{Nie2009}.
\end{proof}

\section{Main Result}
\label{section:mainresult}

In this section we show that there is a surprising connection between strong Kurtz randomness and the class of reordered computable numbers, introduced recently by the second author~\cite[Definition 7]{Jan2024}.

\begin{definition}[Janicki \cite{Jan2024}]
	A real number $x$ is called \emph{reordered computable} if $x$ has a computable name $f \colon \IN \to \IN$ such that the series $\sum_{k}^{} u_f(k) \cdot 2^{-f(k)}$ converges computably.
\end{definition}

It has recently been shown by Janicki \cite[Corollary 24]{Jan2024} that a reordered computable number cannot be Martin-Löf random. 
The following result strengthens this and gives an important characterization of reordered computable numbers. The proof of the $\impliedby$-implication is very similar to the proof of \cite[Theorem 2]{HJ2025}. 

\begin{theorem}\label{theorem:main-eq}
For a real number $x\in \left]0,1\right]$ the following are equivalent:
\begin{itemize}
\item
$x$ is reordered computable.
\item
$x$ is left-computable and not strongly Kurtz random.
\end{itemize}
\end{theorem}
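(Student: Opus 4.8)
The plan is to prove the two implications of the equivalence separately, using Theorem~\ref{satz:non-strongly-kurtz-random-strong-kurtz-test} to trade ``(not) strongly Kurtz random'' for ``(not) covered by a strong Kurtz test'' wherever convenient.

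For the implication from ``reordered computable'' to ``left-computable and not strongly Kurtz random'', which upgrades Janicki's \cite[Corollary~24]{Jan2024}, note first that a reordered computable number has, by definition, a computable name and is therefore left-computable. So fix a computable name $f$ of $x$ for which $\sum_k u_f(k)2^{-f(k)}$ converges computably, together with a computable modulus $h$ satisfying $\sum_{k\ge h(n)}u_f(k)2^{-f(k)}\le 2^{-n}$ for all $n$. To see that $x$ is not strongly Kurtz random I would build a strong Kurtz test $g$ covering it: for a suitable computable length function $n\mapsto\ell_n$, let $V_{g(n)}$ be the set of all length-$\ell_n$ strings that are still compatible with the finite information available after $h(n)$ stages of $f$, namely the dyadic partial sum $\sum_{k<h(n)}2^{-f(k)}\le x$ together with the multiset of values already produced. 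Since all of these strings have the single common length $\ell_n$, the only nontrivial requirement is the Martin-L{\"o}f weight bound $\sum_{\tau\in V_{g(n)}}2^{-|\tau|}\le 2^{-n}$, that is, that there are at most $2^{\ell_n-n}$ of them; this is the combinatorial heart of the argument and the step where the computable modulus of $\sum_k u_f(k)2^{-f(k)}$ is used, because that quantity controls the ``repetition mass'' of the name and hence limits how far the partial sums can still disturb the low-order bits of $x$, thereby bounding the number of candidate length-$\ell_n$ prefixes. Since $x\restriction\ell_n$ is always among these candidates, $g$ is a strong Kurtz test covering $x$, and Theorem~\ref{satz:non-strongly-kurtz-random-strong-kurtz-test} finishes this direction.

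For the converse, assume $x\in[0,1]$ is left-computable and not strongly Kurtz random; here the argument is modelled on the proof of \cite[Theorem~2]{HJ25}. By Theorem~\ref{satz:non-strongly-kurtz-random-strong-kurtz-test} fix a strong Kurtz test $g$ covering $x$. Each $V_{g(n)}$ is nonempty, since it contains a prefix of $x$, and finite, all of its strings have one common length $\ell_n$, and $\sum_{\tau\in V_{g(n)}}2^{-|\tau|}\le 2^{-n}$ forces $|V_{g(n)}|\le 2^{\ell_n-n}$; since $V_{g(n)}\neq\emptyset$ the map $n\mapsto\ell_n$ is computable, and after passing to a subsequence and deleting from $V_{g(n+1)}$ every string that has no prefix in $V_{g(n)}$ one may assume $(\ell_n)_n$ strictly increasing and the sets prefix-nested. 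Then $G_n:=\bigcup_{\tau\in V_{g(n)}}I_\tau$ is a computable shrinking sequence of finite unions of dyadic intervals of length $2^{-\ell_n}$, of total length at most $2^{-n}$, with $x\in G_n$ for all $n$. Fixing in addition a computable increasing sequence of rationals converging to $x$, I would construct a computable name $\phi$ of $x$ as a concatenation of finite blocks of exponents, in which each block encodes the next increment of $x$ that gets certified by playing the rational lower bounds against the shrinking family $G_n$, and in which the blocks respect the nesting of the $G_n$. This yields $\sum_k2^{-\phi(k)}=x$, and the point is that, because the candidate set at level $n$ has total length at most $2^{-n}$ and the blocks are laid out along the nesting, the repeated exponents of $\phi$ that occur from the $n$-th block onward carry total $u_\phi$-weight bounded by a computable function of $n$ that tends to $0$; hence $\sum_k u_\phi(k)2^{-\phi(k)}$ converges computably, so $x$ is reordered computable.

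The genuinely delicate step is this construction of $\phi$. At no finite stage does one learn which string of $V_{g(n)}$ equals the true prefix $x\restriction\ell_n$, so the blocks of $\phi$ must be produced purely from the increasing rational lower bounds for $x$ together with the shrinking family of candidate intervals $G_n$ supplied by the test, and one must argue that these eventually determine each successive block, so that $\phi$ is \emph{total} computable, while \emph{simultaneously} keeping $\sum_k u_\phi(k)2^{-\phi(k)}$ quantitatively under control; it is precisely here that the bound $|V_{g(n)}|\le 2^{\ell_n-n}$, rather than mere finiteness of the $V_{g(n)}$, is indispensable. In the other direction the analogous but milder obstacle is to convert the computable modulus of $\sum_k u_f(k)2^{-f(k)}$ into the counting bound on the candidate prefixes, i.e.\ to make precise that computable control of the repetition mass of a name limits the carry propagation in its partial sums.
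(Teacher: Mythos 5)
Your forward direction has a genuine gap. You take $V_{g(n)}$ to be the set of all length-$\ell_n$ strings compatible with the information available after a single computable stage $h(n)$ of the name $f$ (the partial sum together with the multiset of values produced so far), and you claim the computable modulus of the reordered series forces $|V_{g(n)}|\le 2^{\ell_n-n}$. It does not. The modulus only bounds the mass still to come from \emph{large} values of the name; it gives no computable bound on when the finitely many remaining indices $k$ with $f(k)$ \emph{small} will appear, and such terms may arrive arbitrarily late, shifting the partial sum by discrete amounts totalling up to almost $1$. (If one could compute a stage past which no small values occur, $x$ itself would be computable, which reordered computable numbers need not be.) Consequently the set of prefixes still compatible at stage $h(n)$ consists of roughly one $2^{-n}$-window for each possible sum of outstanding small-value terms, and its measure is of order $2^{h(n)-n}$ rather than $2^{-n}$; shrinking it to satisfy the weight bound destroys the coverage of $x\restriction\ell_n$. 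The paper's proof avoids exactly this by not working at a single stage: a movable-marker construction enumerates, over \emph{all} stages $t$, small intervals $J_t$ around the current partial sum, $V_{g(n)}$ collects the length-$s(n)$ strings met by some enumerated interval of length $2^{-s(n)}$ with $s(n)=r(n+2)+n+2$, and the number of such intervals is bounded by a two-case count: at most $\sum_{k<s(n)}u_f(k)$ triggers caused by small values plus at most $2^{s(n)}\cdot\sum_{k\ge r(n+2)}u_f(k)2^{-k}$ triggers caused by blocks of large values. Some such global, whole-history argument is indispensable here.

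Your converse is the right skeleton and is essentially the paper's argument with the test sets $V_{g(n)}$ (of size at most $2^{\ell_n-n}$) playing the role of the paper's sets of strings of complexity at most $r(n)-n$; but you defer exactly the step that carries the proof. Totality of the name is the easy part: define $s(t+1)$ as the least $m>s(t)$ such that for every $k\le t$ the prefix $x_m\restriction\ell_k$ is seen to enter $V_{g(k)}$; this dovetailed search terminates because $x\restriction\ell_k\in V_{g(k)}$ and $x$ is not dyadic. The real content is the tail bound on $\sum_{m>\ell_n}u_\phi(m)2^{-m}$, and ``the candidate set has total measure at most $2^{-n}$'' is not by itself the reason it holds. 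What one must count is the number of \emph{carries} at position $\ell_n$ along the increasing sequence $(x_{s(t)})_t$: since for $t\ge n+1$ all prefixes $x_{s(t)}\restriction\ell_n$ lie in $V_{g(n)}$ and the sequence is increasing, this prefix changes at most $|V_{g(n)}|-1\le 2^{\ell_n-n}-1$ times, each carry contributes less than $2^{-\ell_n}$ to the tail weight, and summing yields a bound of the form $(n+1)\cdot 2^{-n}$. Without this counting argument your assertion that the repetition weight is ``bounded by a computable function of $n$ that tends to $0$'' is unsupported, so as written both directions are plans rather than proofs.
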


\begin{proof}
Let $x \in \left]0,1\right]$.

$\implies$:
Suppose that $x$ is reordered computable. It is clear that $x$ is left-computable. We can assume that $x$ is not a dyadic number, otherwise $x$ is certainly not strongly Kurtz random.
Fix some computable name $f \colon \IN \to \IN$ for $x$ witnessing the reordered computability of $x$.
Furthermore, fix some computable function $r \colon \IN \to \IN$ with
$\sum_{k=r(n)}^{\infty} u_f(k) \cdot 2^{-k} \leq 2^{-n}$ for all $n \in \IN$.
We can assume without loss of generality that $r(0)>f(0)$ and that $r$ is increasing.
We define an increasing computable function $s\colon\IN\to\IN$ by $s(n):=r(n+3)+n+2$.
Let $(x_n)_n$ be the sequence defined by $x_n := \sum_{k=0}^{n-1} 2^{-f(k)}$, which is a computable increasing sequence of rational numbers converging to $x$. Note that $x_0=0$.

Before we continue with the formal proof, we try to explain the idea behind the following construction.
We shall define a strong Kurtz test that covers $x$.
The $n$-th level of the Kurtz test will consist of strings of length $s(n)$.
Therefore, in a sequence of stages $t$ for each $n$ we are going to define a finite collection of intervals
of the form $\left]x_t,x_t+2^{-s(n)}\right[$.
In stage $0$ we start by adding the interval $\left]0,2^{-s(n)}\right[$ to the $n$-th collection, for every $n$.
In any stage $t>0$ we add the interval $\left]x_t,x_t+2^{-s(n)}\right[$ to the $n$-th collection
if the sequence $(x_i)_i$ has moved by more than $2^{-s(n)}$ to the right since the previous stage
in which we added an interval to the $n$-th collection.
It is clear that in this way for any $n$ we construct a collection of pairwise disjoint intervals of the form
$\left]x_t,x_t+2^{-s(n)}\right[$, for certain $t$. This collection is finite because all intervals in it are
subsets of the interval $\left]0,x+2^{-s(n)}\right[$.
The $n$-th level of the strong Kurtz test will be essentially the union of these 
finitely many intervals of length $2^{-s(n)}$
(this is not true exactly, but right now, for the sketch of the idea, let us assume this).
Why is this a strong Kurtz test? 
For each $n$ there should be an interval of the form $\left]x_t,x_t+2^{-s(n)}\right[$
that contains the limit $x$.
Indeed, this will be true for the largest $t$ such that the interval $\left]x_t,x_t+2^{-s(n)}\right[$
is an element of the $n$-th collection
because for the largest such $t$ there will be no $t'>t$ with $x_{t'} - x_t>2^{-s(n)}$
(otherwise for such a $t'$ we would add the interval $\left]x_{t'},x_{t'}+2^{-s(n)}\right[$ to the $n$-th collection).
Furthermore, the total measure of the $n$-th level of the Kurtz test should be at most $2^{-n}$.
As for any $n$ the intervals in the $n$-th collection are pairwise disjoint
it is sufficient to show that there will not be too many stages
$t$ in which the interval $\left]x_t,x_t+2^{-s(n)}\right[$
is added to the $n$-th collection.
Note that this happens only when $x_t-x_{t'}>2^{-s(n)}$ where $t'$ is the previous stage
in which the corresponding interval $\left]x_{t'},x_{t'}+2^{-s(n)}\right[$ was added to the $n$-th collection.
So, we have to count the number of such stages $t$.
For counting the number of such stages $t$ we look at the size of the additive terms
$2^{-f(i)}$ for $t'\leq i < t$.
On the one hand, if there is such an $i$ with $f(i)<s(n)$ 
then the sequence has moved to the right by at least the large value $2^{-f(i)}$.
But, there will not be too many additive terms $2^{-f(i)}$ for small $f(i)$
because for small $f(i)$ the value $2^{-f(i)}$ is large and the sum of these additive terms is bounded by $x$.
Thus, there cannot be too many stages $t$ such that there is an $i$ with $t'\leq i < t$ and $f(i)<s(n)$.
On the other hand, if all the values $f(i)$ for $t' \leq i < t$ are large (then the additive terms $2^{-f(i)}$
are small) then the estimate
$\sum_{k=r(n)}^{\infty} u_f(k) \cdot 2^{-k} \leq 2^{-n}$ will be crucial. It says that the total movement
implied by large values $f(i)$ (thus, by small values $2^{-f(i)}$) is not that large.
Thus, there cannot be too many stages $t$ such that $x_t - x_{t'}> 2^{-s(n)}$
and all $i$ with $t'\leq i < t$ satisfy $s(n) \leq f(i)$.
Actually, in the formal counting argument below we shall need to distinguish three and not two cases.
But, let us come back to the formal construction. 

Let $\mathcal{I}$ be the set whose elements are the empty interval and all nonempty open intervals with dyadic endpoints.
We recursively compute a function $p \colon \IN^2 \to \IN$ 
(where we write $p(n)[t]$ for $p(n,t)$)
and a double sequence $J\colon\IN^2\to\mathcal{I}$ of intervals in $\mathcal{I}$
(where we write $J(n)[t]$ for $J(n,t)$).
In stage $t$ we shall compute $p(n)[t]$ and $J(n)[t]$ for all $n\in\IN$.
	
In stage $t=0$, we set $p(n)[0] := 0$ and $J(n)[0] := \left]0,2^{-s(n)}\right[$
for all $n \in \IN$. 

In any stage $t>0$, for $n\in\IN$ we set
\begin{eqnarray*}
  \begin{pmatrix} J(n)[t] \\ p(n)[t] \end{pmatrix}
	  := \begin{cases}
		\begin{pmatrix} \left]x_t, x_t  + 2^{-s(n)}\right[ \\ t \end{pmatrix} 
		  & \text{if } n \leq t \text{ and } x_t > x_{p(n)[t-1]} + 2^{-s(n)}, \\
 		\begin{pmatrix} \emptyset \\ p(n)[t-1] \end{pmatrix} 
	    & \text{otherwise},
		\end{cases}
\end{eqnarray*}
This ends the description of stage $t$ and of the definition of $p$ and $J$.
Finally, let $g\colon\IN\to\SigmaS$ be a computable function such that
\[ V_{g(n)} = \left\{\tau \in \Sigma^{s(n)} 
  ~\bigg{|}~ I_\tau \cap \bigcup_{t=0}^\infty J(n)[t] \neq \emptyset \right\}, \]
for all $n\in\IN$.
This ends the construction.

The following assertions are clear:
\begin{itemize}
\item
For each $n\in\IN$ and $t\in\IN$ the interval $J(n)[t]$ is either empty or has length $2^{-s(n)}$.
\item
For each $n\in\IN$ and $t\in\IN$ the number $p(n)[t]$ is the largest stage $t'\leq t$
such that $J(n)[t']$ is not empty.
\end{itemize}
We claim that $g$ is a strong Kurtz test covering (a binary representation of) $x$.
Let us fix a number $n$. We have to show three assertions:
\begin{enumerate}
\item
All strings in the set $V_{g(n)}$ have the same length.
\item
$\sum_{\tau\in V_{g(n)}} 2^{-|\tau|} \leq 2^{-n}$.
\item
There is a string $\tau\in V_{g(n)}$ with $x\in I_\tau$.
\end{enumerate}

The first assertion is clear.

We come to the proof of the second assertion.
Let us consider a number $n\in\IN$. 
Remember that for any $t\in\IN$ the interval $J(n)[t]$
is either empty or has length $2^{-s(n)}$.
For any open interval $J(n)[t]$ of length $2^{-s(n)}$ there are most two strings $\tau$ of length $s(n)$
with $I_\tau \cap J(n)[t] \neq\emptyset$.
Thus, it is sufficient to show that there at most $2^{s(n)-n-1} = 2^{r(n+3)+1}$ stages $t$ such that $J(n)[t]$ is not empty.
By construction, for any $t\in\IN$, the interval $J(n)[t]$ can be nonempty only if $t=0$ or if $t>0$ and 
\[ 2^{-s(n)} < x_t - x_{p(n)[t-1]} = \sum_{k \in \{p(n)[t-1], \dots, t-1\}} 2^{-f(k)} . \]
And for $t>0$ this can happen only if there is some $k\in\{p(n)[t-1], \dots, t-1\}$ with $f(k)<s(n)$ or if $\sum_{\substack{k \in \{p(n)[t-1], \ldots, t-1\}, \\ s(n) \leq f(k)}} 2^{-f(k)} > 2^{-s(n)}$ holds. The first case happens at most $\sum_{k=0}^{s(n)-1} u_f(k)$ times, the second case at most $\left\lfloor \frac{\sum_{k=s(n)}^\infty u_f(k) \cdot 2^{-k}}{2^{-s(n)}} \right\rfloor$ times. 
So, the number of stages $t$ such that $J(n)[t]$ is nonempty is bounded by
	\begin{align*}
		&1 + \sum_{k=0}^{s(n)-1} u_f(k) + \left\lfloor \frac{\sum_{k=s(n)}^\infty u_f(k) \cdot 2^{-k}}{2^{-s(n)}} \right\rfloor \\
		&\leq 1+ \sum_{k=0}^{r(n+3)-1} u_f(k) + \sum_{k=r(n+3)}^{s(n)-1} u_f(k) +  2^{s(n)}\cdot \sum_{k=s(n)}^\infty u_f(k) \cdot 2^{-k}.
\end{align*}
(the additive term $1$ at the beginning of this sum is due to the stage $t=0$).
We give separate estimates for the three main terms in this sum:
\[ 		\sum_{k=0}^{r(n+3)-1} u_f(k)
    \leq 2^{r(n+3)-1} \cdot \sum_{k=0}^{r(n+3)-1} u_f(k) \cdot 2^{-k}
		\leq 2^{r(n+3)-1} \cdot x  \]
and
\begin{eqnarray*}
    \sum_{k=r(n+3)}^{s(n)-1} u_f(k)
    &\leq & 2^{r(n+3)+n+1} \cdot \sum_{k=r(n+3)}^{r(n+3)+n+1} u_f(k) \cdot 2^{-k} \\
		&\leq & 2^{r(n+3)+n+1} \cdot 2^{-(n+3)}
		= 2^{r(n+3)-2} 
\end{eqnarray*}
and
\begin{eqnarray*}   
   2^{s(n)}\cdot \sum_{k=s(n)}^\infty u_f(k) \cdot 2^{-k}
   &\leq & 2^{s(n)} \cdot \sum_{k=r(n+3)}^\infty u_f(k) \cdot 2^{-k} \\
	 &\leq & 2^{r(n+3)+n+2} \cdot 2^{-(n+3)} 
	 = 2^{r(n+3)-1} .
\end{eqnarray*}
So, for the number of stages $t$ such that $J(n)[t]$ is nonempty we obtain the upper bound
\[ 1 + 2^{r(n+3)-1} \cdot x + 2^{r(n+3)-2} + 2^{r(n+3)-1} < 2^{r(n+3)+1} \]
(remember that $x\leq 1$). 

We come to the proof of the third assertion.
Let us consider a number $n\in\IN$. 
We have to show that there is a string $\tau\in V_{g(n)}$ with $x\in I_\tau$.
It is sufficient to show that there is some stage $t$ such that $x\in J(n)[t]$.
In the proof of the second assertion we have seen that $J(n)[t]$ is nonempty only for finitely many $t$.
That implies that the nondecreasing sequence $(p(n)[t])_t$ is bounded and, therefore,
ultimately constant. 
Let $t_\infty$ be the smallest stage with $p(n)[t_\infty] = \lim_{t \to \infty} p(n)[t]$.
Then $t_\infty = p(n)[t_\infty] = \lim_{t \to \infty} p(n)[t]$
and $J(n)[t_\infty] = \left]x_{t_\infty},x_{t_\infty}+2^{-s(n)}\right[$.
For any $t'\geq \max\{n, t_\infty+1\}$ we have
$x_{t'} \leq x_{p(n)[t_\infty]} + 2^{-s(n)} = x_{t_\infty} + 2^{-s(n)}$.
But this implies $x_{t_\infty} < x \leq x_{t_\infty} + 2^{-s(n)}$.
Remember that we assumed that $x$ is not a dyadic number,
hence we even have $x_{t_\infty} < x < x_{t_\infty} + 2^{-s(n)}$,
hence $x\in J(n)[t_\infty]$.

We have shown that $g$ is a strong Kurtz test covering $x$. Therefore, $x$ is not strongly Kurtz random.

$\impliedby$: Now suppose that $x$ is left-computable and not strongly Kurtz-random. 
We can assume that $x$ is not a dyadic number, otherwise $x$ is certainly reordered computable. Let $(x_n)_n$ be a computable increasing sequence of dyadic numbers converging to $x$ with $x_0 := 0$. Let $r \colon \IN \to \IN$ be a computable increasing function with $K(x \restriction r(n)) \leq r(n) - n$ for every $n \in \IN$. 
	
We claim that there is an increasing computable function $s \colon \IN \to \IN$ with the following property:
for all $t\in\IN$
\[ (\forall n \in \{0, \dots, t-1\})\ K(x_{s(t)} \restriction r(n)) \leq r(n) - n . \]
Indeed, we can define $s(0):=0$. Once suitable values $s(0),\ldots,s(t)$ have been found,
we search for a number $m>s(t)$ such that 
$(\forall n \in \{0, \dots, t\})\ K(x_{m} \restriction r(n)) \leq r(n) - n$. We will find such a number $m$ because
$(x_n)_n$ converges to $x$ and $K(x \restriction r(n)) \leq r(n) - n$ is true for every $n \in \IN$.
Then we set $s(t+1):=m$.
	
We want to show that $x$ is reordered computable. We define a computable function $f:\IN\to\IN$ witnessing this property in stages. In stage $0$ we start with the function with empty domain, and in any stage $t+1$ we append finitely many values as follows: Let $f[t]$ be the finite prefix of $f$ that is defined at the end of stage $t$. For each $t\in\IN$, let $D[t]\subseteq\IN$ be the uniquely determined finite set of natural numbers with $x_{D[t]} = x_{s(t+1)}-x_{s(t)}$.
	We fix the next $|D[t]|>0$ values of $f$ by demanding that these values should be exactly the values in the set $D[t]$ in increasing order. In this way a computable function $f:\IN\to\IN$ is defined.
	Obviously, for all $t\in\IN$ it satisfies 
	\[ x_{s(t)} = \sum_{m=0}^\infty u_{f[t]}(m) \cdot 2^{-m} \]
	(note that $u_{f[t]}(m)=0$ for almost all $m$)
	and therefore $x = \sum_{m=0}^\infty u_f(m) \cdot 2^{-m} = \sum_{k=0}^\infty 2^{-f(k)} $.
	Thus, $f$ is a computable name of $x$.
	We claim that the series $\sum_m u_f(m) \cdot 2^{-m}$ converges computably. In fact, we claim that
	\begin{equation*}
		\sum_{m=r(n)+1}^{\infty} u_f(m) \cdot 2^{-m} \leq (n+1)\cdot 2^{-n}
	\end{equation*}
	for all $n \in \IN$.
	
	Fix some arbitrary $n \in \IN$. 
  Let us first give a rough summary of the idea.
	We wish to give an upper bound for the sum $\sum_{m=r(n)+1}^{\infty} u_f(m) \cdot 2^{-m}$.
	Therefore, we look at the sequence $\left(\sum_{m=r(n)+1}^{\infty} u_{f[t]}(m) \cdot 2^{-m}\right)_t$.
	On the one hand, in each step from $t$ to $t+1$ this sum can increase at most by $2^{-r(n)}$.
	Thus, when we consider the largest multiple of $2^{-r(n)}$ below it, we see that
	we obtain a sequence of multiples of $2^{-r(n)}$ starting at zero and without gaps.
	So, the number of changes of this multiple is essentially equal to $\sum_{m=r(n)+1}^{\infty} u_f(m) \cdot 2^{-m}$.
	On the other hand, whenever in this way we reach the next multiple of $2^{-r(n)}$ 
	this causes a carry to position $r(n)-1$
	and, thus, causes a change in the prefix $x_{s(n)} \restriction r(n)$ of length $r(n)$ of $x_{s(n)}$.
	But, the upper bound for the Kolmogorov complexity $K(x_{s(n)} \restriction r(n))$
	implies that for $t>n$ there are considerably less than $2^{r(n)}$ possibilities for this prefix
	$x_{s(n)} \restriction r(n)$.
	Thus for increasing $t>n$ the sum $\sum_{m=r(n)+1}^{\infty} u_{f[t]}(m) \cdot 2^{-m}$ 
	cannot reach the next multiple of $2^{-r(n)}$ very often because that results in a change of
	the prefix of length $r(n)$.
	In this way we will obtain an upper bound on the limit $\sum_{m=r(n)+1}^{\infty} u_f(m) \cdot 2^{-m}$.
	
	Now we come to the detailed verification.
	For any $t>n$ we have $K(x_{s(t)} \restriction r(n)) \leq r(n) - n$.
  There are at most $2^{r(n)-n}-1$ strings $w$ of length $r(n)$ with 
	$K(w) \leq r(n)-n$. Hence, for $t>n$ there at most $2^{r(n)-n)}-1$ possibilities
	for the prefix $x_{s(t)}\restriction r(n)$ of length $r(n)$ of the binary representation of $x_{s(t)}$.
  Since the sequence $(x_{s(t)})_t$ is increasing,
	if $t_1<t_2$ satisfy $x_{s(t_2)}\restriction r(n) \neq x_{s(t_1)}\restriction r(n)$
	then for all $t_3\geq t_2$ we have $x_{s(t_3)}\restriction r(n) \neq x_{s(t_1)}\restriction r(n)$ as well.
	Hence, there are at most $2^{r(n)-n}-2$ stages $t > n$
	such that $x_{s(t+1)}\restriction r(n) \neq x_{s(t)}\restriction r(n)$.
	
	Such a change of the first $r(n)$ bits will certainly occur whenever a carry to the position $r(n)-1$ happens.
	By that we mean that the sum of the (small) additive terms of the form $2^{-f(i)}$ with $f(i)\geq r(n)+1$ reaches the next
	multiple of $2^{-r(n)}$ (note that the numerical contribution of a bit at position $r(n)-1$ in a binary representation
	of a number is $2^{-r(n)}$) and causes a change in the first $r(n)$ bits (which end with the bit in position $r(n)-1$). To make this precise, a carry to the position $r(n)-1$ happens in stage $t+1$ if
	$R[t+1]>R[t]$ where
the sequence $(R[t])_t$ is defined by
	\begin{align*}
		R[t] &:= \left\lfloor 2^{r(n)}\cdot \sum_{m=r(n)+1}^\infty u_{f[t]}(m)\cdot 2^{-m} \right\rfloor
	\end{align*}
	for all $t\in\IN$. We observe
	\begin{equation*}
		\sum_{m=r(n)+1}^\infty u_{f[t]}(m)\cdot 2^{-m} < (R[t] + 1) \cdot 2^{-r(n)}.
	\end{equation*}
	Note that the sum appearing in the definition of $R[t]$ is actually a finite sum because $u_{f[t]}(m)=0$ for almost all $m$. Obviously, $(R[t])_t$ is non-decreasing and bounded. Let $R[\infty] := \lim_{t\to\infty} R[t]$. Then we obtain the inequality
	\begin{equation*}
		\sum_{m=r(n)+1}^\infty u_f(m)\cdot 2^{-m} \leq (R[\infty] + 1) \cdot 2^{-r(n)}.
	\end{equation*}
	The next observation is crucial:
as for all $m,t\in\IN$ we have $u_{f[t+1]}(m) \leq u_{f[t]}(m)+1$, the difference between the sums $\sum_{m=r(n)+1}^\infty u_{f[t+1]}(m)\cdot 2^{-m}$ and $\sum_{m=r(n)+1}^\infty u_{f[t]}(m)\cdot 2^{-m}$ is smaller than $2^{-r(n)}$, which implies $R[t+1] \leq R[t] + 1$. Thus,
\[ R[t+1] > R[t] \iff R[t+1] = R[t]+1 . \]
Hence, the set \[ T:= \{t\in\IN\mid R[t+1] > R[t]\} \]
	is finite, and its cardinality is $|T| = R[\infty]$ (remember that we assume that $x$ is not a dyadic number).
	Note that the elements in $T$ are exactly the stages $t$ such that a carry to position $r(n)-1$ occurs in stage $t+1$.
As we explained above, this implies
$x_{s(t+1)}\restriction r(n) \neq x_{s(t)}\restriction r(n)$, that is, for $i$ increasing from $t$ to $t+1$,
the binary expansion of $x_{s(i)}$ changes on the first $r(n)$ bits.
Above we already noted that there are at most $2^{r(n)-n}-2$ stages $t > n$ where this can happen.
	By taking into account that $T$ may also contain stages $t \leq n$ 
	(we do not have any control over the prefix of length $r(n)$ of $x_{s(t)}$ for $t\leq n$; so, the stages $t\leq n$ have
	to be taken into account separately)
	we obtain 
	\begin{equation*}
		\left|T\right| \leq 2^{r(n)-n} - 2 + n + 1 =  2^{r(n)-n} + n - 1.
	\end{equation*} 
	Remember that $r$ is increasing, so we have $r(n)\geq n$ for all $n \in \IN$. Putting everything together, we obtain
	\begin{align*}
		\sum_{m=r(n)+1}^{\infty} u_f(m) \cdot 2^{-m}
		&\leq \left(R[\infty] + 1\right) \cdot 2^{-r(n)} \\
		&= \left(\left|T\right| + 1\right) \cdot 2^{-r(n)} \\
		&\leq (2^{r(n)-n} + n) \cdot 2^{-r(n)} \\
		&= 2^{-n} + n\cdot 2^{-r(n)} \\
		&\leq (n+1)\cdot 2^{-n},
	\end{align*}
	showing that $x$ is reordered computable.
\end{proof}

\section{Binary Expansions}
\label{section:binaryexpansions}

We are interested in the binary expansions of reordered computable numbers. For an infinite set $A\subseteq\IN$ let $p_A \colon \IN\to\IN$ be the uniquely determined increasing function with $p_A(\IN) = A$.

\begin{definition}\label{defi:immunity-definitions}
	Let $A \subseteq \IN$ be an infinite set.
	\begin{enumerate}[(1)]
		\item $A$ is called \emph{immune} if it does not have any infinite computably enumerable subset.
		\item $A$ is called \emph{hyperimmune} if there is no computable function $f \colon \IN \to \IN$ with $p_A(n) \leq f(n)$ for all $n \in \IN$.
		\item $A$ is called \emph{hyperhyperimmune} if there is no computable function $f \colon \IN \to \IN$ such that $(W_{f(n)})_n$ is a sequence of pairwise disjoint and finite sets with $A \cap W_{f(n)} \neq \emptyset$
		for all $n \in \IN$.
		\item $A$ is called \emph{strongly hyperhyperimmune} if there is no computable function $f \colon \IN \to \IN$ such that $(W_{f(n)})_n$ is a sequence of pairwise disjoint sets with $A \cap W_{f(n)} \neq \emptyset$
		for all $n \in \IN$.
		\item $A$ is called \emph{cohesive} if there is no number $e \in \IN$ such that both $A \cap W_e$ and $A \cap \overline{W_e}$ are infinite.
		\item $A$ is called \emph{bi-immune} if both $A$ and $\overline{A}$ are immune.
	\end{enumerate}
\end{definition}

It is well-known that for every $X \in \{1, \dots, 4\}$ the implication $(X+1) \implies (X)$ holds, but none of these implications can be reversed.
For more information, the reader is referred to \cite[Chapter XI.1]{Soa1987}. However, if we restrict ourselves to computably approximable sets, the notions of hyperhyperimmunity and strong hyperhyperimmunity become equivalent. A set $A\subseteq\IN$ is called \emph{computably approximable}\footnote{These sets are also called \emph{limit-computable} or \emph{$\Delta_2^0$}.} if there is a computable function $a \colon \IN^2 \to \{0,1\}$ with $A(i) = \lim_{t\to\infty} a(i, t)$ for every $i \in \IN$. It is well-known that a set $A \subseteq \IN$ is computably approximable if and only if the number $x_A$ is computably approximable. The following lemma has been stated by Cooper \cite[Page 600]{Coo1972}. For a proof of a stronger assertion the reader is referred to Rolletschek~\cite[Proposition 1]{Rol1983}.

\begin{lemma}[Cooper \cite{Coo1972}]
\label{lem:Delta^0_2+hii=>shhi}
	Let $A \subseteq \IN$ be a set which is both computably approximable and hyperhyperimmune. Then $A$ is even strongly hyperhyperimmune.
\end{lemma}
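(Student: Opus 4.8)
### Proof Proposal for Lemma \ref{lem:Delta^0_2+hii=>shhi}

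The plan is to prove the contrapositive: assuming $A$ is computably approximable via some $a\colon\IN^2\to\{0,1\}$ and that $A$ is \emph{not} strongly hyperhyperimmune, I will show $A$ is not hyperhyperimmune. By the failure of strong hyperhyperimmunity, fix a computable $f\colon\IN\to\IN$ such that the sets $W_{f(n)}$ are pairwise disjoint and $A\cap W_{f(n)}\neq\emptyset$ for every $n$; the only obstruction to hyperhyperimmunity being refuted outright is that these sets may be infinite. So the task is to replace each $W_{f(n)}$ by a \emph{finite} c.e.\ set $F_n$ that still meets $A$, uniformly in $n$, while keeping the $F_n$ pairwise disjoint.

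The key idea is to use the $\Delta^0_2$ approximation to ``pin down'' a witness in each $W_{f(n)}$. First I would note that since $A$ is infinite and co-infinite can be assumed (or handle trivial cases separately), and more importantly that we may shrink the family: by a standard bookkeeping argument we may assume each $W_{f(n)}$ is infinite (discard the finitely-generated ones and reindex — if infinitely many $W_{f(n)}$ are already finite we are immediately done). Now enumerate a finite set $F_n$ as follows. Run the enumeration of $W_{f(n)}$; at stage $t$, let $x_{n,t}$ be the least element enumerated into $W_{f(n)}$ by stage $t$ whose current approximation $a(x_{n,t},t)=1$, if such an element exists, and put all elements of $W_{f(n)}$ enumerated up to that point into $F_n$; once such an $x_{n,t}$ is found with $x_{n,t}$ also belonging to $W_{f(n)}$ at a "confirmed" stage, we could try to freeze $F_n$. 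The difficulty is that the approximation $a$ may flip infinitely often on the genuine element of $A\cap W_{f(n)}$, so a naive "first apparent witness" strategy need not terminate, and $F_n$ could grow unboundedly.

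The hard part — and the main obstacle — is precisely ensuring each $F_n$ is finite despite the instability of the $\Delta^0_2$ approximation. To overcome this I would invoke the true-stage or modulus machinery implicit in Cooper's argument: one does not need a single fixed witness, only that \emph{some} element of the finite set $F_n$ lies in $A$. So define $F_n$ to contain, for each $m$, the least element of $W_{f(n)}$ greater than $m$ whose approximation stabilizes to $1$ before the approximation of everything below it in $W_{f(n)}$ has settled — more carefully, use the fact that $A\cap W_{f(n)}\neq\emptyset$ to find via a $\Sigma^0_2$ search a specific element $y_n\in A\cap W_{f(n)}$ and bound the stage at which its approximation last changes; then $F_n$ is the set of elements of $W_{f(n)}$ enumerated up to that bounding stage. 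Since $y_n$ genuinely lies in $A$ this stage exists, $F_n$ is finite, $y_n\in F_n\cap A$, and disjointness is inherited from the $W_{f(n)}$. The resulting $(F_n)_n$ is uniformly c.e.\ (its index function is computable because the search, though unbounded, is effective in the $\emptyset'$-oracle which suffices to define a computable index for a c.e.\ set), witnessing that $A$ is not hyperhyperimmune. For a fully rigorous treatment and a strengthening, the reader is referred to Rolletschek~\cite[Proposition 1]{Rol1983}.
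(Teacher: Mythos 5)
The paper itself gives no proof of this lemma---it only cites Cooper and points to Rolletschek~\cite[Proposition 1]{Rol1983}---so your attempt has to stand on its own, and it has a genuine gap at the decisive step. To refute hyperhyperimmunity you must produce a \emph{computable} function $g\colon\IN\to\IN$ such that $(W_{g(n)})_n$ is a disjoint array of finite sets meeting $A$. Your sets $F_n$ are defined by first locating a true witness $y_n\in A\cap W_{f(n)}$ and then a stage by which $a(y_n,\cdot)$ has settled; both of these are $\emptyset'$-questions, so the map $n\mapsto F_n$ (equivalently $n\mapsto$ an index for $F_n$) is only $\emptyset'$-computable. The closing claim that effectiveness ``in the $\emptyset'$-oracle\ldots suffices to define a computable index for a c.e.\ set'' is exactly where the argument fails: each individual finite $F_n$ of course has a c.e.\ index, but the \emph{index function} is not computable, and with a merely $\emptyset'$-computable array the statement becomes trivial (for any infinite $\Delta^0_2$ set one could pick one true element per block) and says nothing about hyperhyperimmunity. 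A secondary slip: ``if infinitely many $W_{f(n)}$ are already finite we are immediately done'' is not justified either, since one cannot computably select the finite members of the array.

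The missing idea is to make the whole construction a single effective enumeration and let the $\Delta^0_2$ limit do the verification afterwards, rather than searching for settled witnesses in advance. For instance, enumerate $F_n\subseteq W_{f(n)}$ by the rule: at stage $s$, if every element currently in $F_n$ satisfies $a(x,s)=0$, then put into $F_n$ all $x\in W_{f(n),s}$ with $a(x,s)=1$. This is uniformly computable, so $n\mapsto F_n$ has a computable index function, and disjointness is inherited from $(W_{f(n)})_n$. If some element of $F_n$ lies in $A$, its approximation eventually settles to $1$ and the trigger never fires again, so $F_n$ is finite and meets $A$. If no element of $F_n$ were in $A$, then $F_n$ would be finite (a fixed element $y\in A\cap W_{f(n)}$ would be added the next time the trigger fires after $y$ appears and $a(y,\cdot)$ settles, so the trigger can fire only finitely often), all its elements' approximations would settle to $0$, and the trigger would fire once more and recruit $y$---a contradiction. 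This yields the computable disjoint strong array the definition demands; your oracle-based search cannot be repaired into one without some such device.
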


We call a real number $x \in \left[0,1\right]$ \emph{immune} if there exists an immune set $A \subseteq \IN$ with $x_A = x$. The other notions for sets as in Definition \ref{defi:immunity-definitions} are used accordingly on real numbers in $[0,1]$. The following corollary strengthens \cite[Theorem 2]{HJ2025}.

\begin{corollary}
\label{kor:bi-immune}
Every left-computable number in $[0,1]$ that is not bi-immune is reordered computable.
\end{corollary}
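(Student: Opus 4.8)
My plan is to deduce this from Theorem~\ref{theorem:main-eq} together with the test characterization of strong Kurtz randomness in Theorem~\ref{satz:non-strongly-kurtz-random-strong-kurtz-test}. Since the given number $x$ is left-computable, by Theorem~\ref{theorem:main-eq} it suffices to show that $x$ is not strongly Kurtz random, and for that, by Theorem~\ref{satz:non-strongly-kurtz-random-strong-kurtz-test}, it suffices to construct a strong Kurtz test that covers $x$. (The notion of bi-immunity is only defined for reals in $[0,1]$, so the hypothesis gives $x\in[0,1]$.) First I would dispose of the dyadic case, in which $x$ is reordered computable anyway, as already observed in the proof of Theorem~\ref{theorem:main-eq}. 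So from now on assume $x$ is not dyadic, let $A$ be the unique subset of $\IN$ with $x_A=x$, and identify $A$ with the binary expansion of $x$. Since $A$ is not bi-immune, one of $A$, $\overline A$ has an infinite computably enumerable subset; either way I obtain an infinite computably enumerable set $B$ and a bit $\epsilon\in\{0,1\}$ with $x(j)=\epsilon$ for all $j\in B$ (take $\epsilon=1$ if $B\subseteq A$, and $\epsilon=0$ if $B\subseteq\overline A$). Fix a repetition-free enumeration $b_0,b_1,b_2,\dots$ of $B$.

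Next I would build the test. As $B$ is infinite, the function $r$ given by $r(0):=0$ and $r(n):=1+\max\{b_0,\dots,b_{n-1}\}$ for $n\ge1$ is computable, and $r(n)\ge n$ because $b_0,\dots,b_{n-1}$ are $n$ distinct numbers below $r(n)$. Let $f\colon\IN\to\IN$ be a computable function with
\[ V_{f(n)}=\{\tau\in\Sigma^{r(n)} \mid \tau(b_i)=\epsilon\text{ for all }i<n\} \]
for all $n\in\IN$. Then every string in $V_{f(n)}$ has the common length $r(n)$, and $|V_{f(n)}|=2^{r(n)-n}$, so $\sum_{\tau\in V_{f(n)}}2^{-|\tau|}=2^{-n}$ for every $n$; hence $f$ is a strong Kurtz test. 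Finally, since $b_0,\dots,b_{n-1}\in B$ we have $x(b_i)=\epsilon$ for all $i<n$, so $x\restriction r(n)\in V_{f(n)}$ for every $n$, i.e.\ $f$ covers $x$. Applying Theorem~\ref{satz:non-strongly-kurtz-random-strong-kurtz-test} and then Theorem~\ref{theorem:main-eq} finishes the argument.

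I do not anticipate a genuine obstacle here: the whole content is the observation that an infinite computably enumerable subset of $A$ or of $\overline A$ pins down $n$ of the first $r(n)$ bits of $x$, which is exactly the amount of information a strong Kurtz test is allowed to exploit. The only points that need a little care are the uniform treatment of the two cases ``$A$ not immune'' and ``$\overline A$ not immune'' (handled by the parameter $\epsilon$), the trivial dyadic case, and the inequality $r(n)\ge n$, which is what validates the Martin-L\"of test inequality $\sum_{\tau\in V_{f(n)}}2^{-|\tau|}\le 2^{-n}$.
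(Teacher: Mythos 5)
Your proof is correct, but it takes a genuinely different (more self-contained) route than the paper. The paper's proof is a two-line citation: it invokes Jockusch's theorem that every Kurtz random set is bi-immune (see \cite[Theorem 7.2.23]{DH2010}), concludes that a non-bi-immune $x$ is not Kurtz random, hence not strongly Kurtz random, and then applies Theorem~\ref{theorem:main-eq}. You instead unfold that black box and build the strong Kurtz test explicitly from an infinite c.e.\ subset $B$ of $A$ or of $\overline A$, which is exactly the standard argument behind Jockusch's result; your verification ($r(n)\ge n$ because the $b_i$ are distinct, $|V_{f(n)}|=2^{r(n)-n}$, hence weight exactly $2^{-n}$, and $x\restriction r(n)\in V_{f(n)})$ is sound, and the dyadic case and the choice of $\epsilon$ are handled correctly. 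What your approach buys is independence from the cited fact, and in fact a little more: since $n\mapsto|V_{f(n)}|=2^{r(n)-n}$ is computable, by Lemma~3.7 your test already witnesses that $x$ is not even Kurtz random, so you have effectively reproved the Jockusch implication along the way. What the paper's approach buys is brevity and a cleaner logical dependency on known results. Both are valid proofs of the corollary.
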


\begin{proof}
Let $x$ be a left-computable number that is not bi-immune.
As every Kurtz random set is bi-immune (this is due to Jockusch; see \cite[Theorem 7.2.23]{DH2010}), $x$ is not Kurtz random, hence, not strongly Kurtz random.
With Theorem~\ref{theorem:main-eq} we conclude that $x$ is reordered computable.
\end{proof}

After the presentation of the results in \cite{HJ2025} at the CiE 2025 conference in Lisbon, the second author was asked whether there are examples of reordered computable numbers which are hyperhyperimmune. We will answer this question in a very strong sense, showing that every left-computable number which is hyperhyperimmune is reordered computable and that there are such numbers.

\begin{lemma}\label{lem:shhi=>not-bi-immune}
	Let $A \subseteq \IN$ be a strongly hyperhyperimmune set. Then $\overline{A}$ is not immune.
\end{lemma}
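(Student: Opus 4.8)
The plan is to establish the contrapositive: assuming $\overline{A}$ is immune, I will construct a computable function $f\colon\IN\to\IN$ witnessing that $A$ is \emph{not} strongly hyperhyperimmune, contradicting the hypothesis. Since immunity includes the requirement of being infinite, I note first that if $\overline{A}$ is finite there is nothing to prove; so I may assume that $\overline{A}$ is infinite and has no infinite computably enumerable subset.

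First I would fix, once and for all, a uniformly computable sequence $(C_n)_n$ of pairwise disjoint infinite computable subsets of $\IN$ --- for instance $C_n:=\{\langle n,k\rangle\mid k\in\IN\}$, using the computable pairing function from Section~\ref{section:preliminaries}. Since $\langle\cdot,\cdot\rangle$ is a bijection, the sets $C_n$ are pairwise disjoint; each $C_n$ is infinite, and each $C_n$ is computable and hence computably enumerable, uniformly in $n$.

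The key step is the observation that $A\cap C_n\neq\emptyset$ for every $n$: otherwise $C_n$ would be an infinite computably enumerable subset of $\overline{A}$, contradicting the immunity of $\overline{A}$. Since $(C_n)_n$ is uniformly computably enumerable, there is a computable function $f\colon\IN\to\IN$ with $W_{f(n)}=C_n$ for all $n$. Then $(W_{f(n)})_n$ is a sequence of pairwise disjoint sets with $A\cap W_{f(n)}\neq\emptyset$ for all $n$, so $A$ is not strongly hyperhyperimmune --- the desired contradiction, which forces $\overline{A}$ not to be immune.

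I do not expect a genuine obstacle here; the argument is short and uses only the definitions. The only points that need a little care are the degenerate case in which $\overline{A}$ is finite, and the remark that, for a set already known to be infinite, ``not immune'' means precisely ``contains an infinite computably enumerable subset'' --- which is exactly what the fixed partition delivers once we know that each block $C_n$ meets $A$.
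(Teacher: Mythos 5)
Your proposal is correct and is essentially the paper's own argument, merely run in the contrapositive direction: the paper uses the same partition $W_{f(i)}=\{\langle i,j\rangle\mid j\in\IN\}$ and concludes directly from strong hyperhyperimmunity that some block $W_{f(k)}$ misses $A$ and is therefore an infinite c.e.\ subset of $\overline{A}$. The extra care you take with the finite case and the quantification over blocks is fine but not needed in the direct formulation.
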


\begin{proof}
	Fix some computable function $f\colon\IN\to\IN$ with 
	\begin{equation*}
		W_{f(i)} := \{ \left\langle i,j \right\rangle \mid j \in \IN \}
	\end{equation*}
	for every $i \in \IN$. All these computably enumerable sets are infinite and pairwise disjoint. Since $A$ is strongly hyperhyperimmune, there is some $k \in \IN$ with $A \cap W_{f(k)} = \emptyset$. Therefore, we have $W_{f(k)} \subseteq \overline{A}$, hence $\overline{A}$ is not immune.
\end{proof}

\begin{corollary}\label{corollary:hhi=>roc}
	Every left-computable number which is hyperhyperimmune is reordered computable.
\end{corollary}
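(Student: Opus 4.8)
The plan is to deduce Corollary~\ref{corollary:hhi=>roc} from the machinery already assembled in this section, using Theorem~\ref{theorem:main-eq} as the bridge to reordered computability. Let $x$ be a left-computable number that is hyperhyperimmune, and fix a hyperhyperimmune set $A\subseteq\IN$ with $x_A=x$. The key observation is that a left-computable number is automatically computably approximable (indeed left-computable is a strengthening of computably approximable), so the set $A$ is computably approximable. Therefore Lemma~\ref{lem:Delta^0_2+hii=>shhi} applies and upgrades $A$ from hyperhyperimmune to strongly hyperhyperimmune.

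Next I would invoke Lemma~\ref{lem:shhi=>not-bi-immune}: since $A$ is strongly hyperhyperimmune, its complement $\overline{A}$ is not immune. Consequently $A$ is not bi-immune (bi-immunity requires both $A$ and $\overline{A}$ to be immune). Hence $x$, being represented by the non-bi-immune set $A$, is a left-computable number that is not bi-immune. At this point Corollary~\ref{kor:bi-immune} directly gives that $x$ is reordered computable, which is exactly what we want.

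One small subtlety to handle carefully is the passage between ``the real number $x$ is hyperhyperimmune'' and ``there is a hyperhyperimmune set $A$ with $x_A=x$'': by the convention introduced just before Corollary~\ref{kor:bi-immune} this is precisely the definition, so no work is needed there, but I would state it explicitly so the reader can follow. Similarly I would spell out that ``not bi-immune'' follows from ``$\overline{A}$ not immune'' by Definition~\ref{defi:immunity-definitions}(6). I do not expect any genuine obstacle here; the corollary is essentially a two-line chain of previously established implications
\[
\text{left-computable} + \text{hhi} \;\Longrightarrow\; \Delta_2^0 + \text{hhi} \;\Longrightarrow\; \text{shhi} \;\Longrightarrow\; \overline{A}\text{ not immune} \;\Longrightarrow\; \text{not bi-immune} \;\Longrightarrow\; \text{reordered computable},
\]
with the last step using Corollary~\ref{kor:bi-immune} (which itself rests on Theorem~\ref{theorem:main-eq} and Jockusch's result that Kurtz random sets are bi-immune). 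If one wanted a self-contained argument avoiding Corollary~\ref{kor:bi-immune}, one could alternatively argue directly: a non-bi-immune left-computable number is not Kurtz random, hence not strongly Kurtz random, hence reordered computable by Theorem~\ref{theorem:main-eq}; but reusing Corollary~\ref{kor:bi-immune} is cleaner.
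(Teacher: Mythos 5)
Your proof is correct and follows exactly the same chain as the paper's own argument: left-computable implies computably approximable, Lemma~\ref{lem:Delta^0_2+hii=>shhi} upgrades hyperhyperimmune to strongly hyperhyperimmune, Lemma~\ref{lem:shhi=>not-bi-immune} rules out bi-immunity, and Corollary~\ref{kor:bi-immune} concludes. No differences worth noting.
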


\begin{proof}
	Let $x$ be a left-computable number which is hyperhyperimmune. Since $x$ is computably approximable, according to Lemma~\ref{lem:Delta^0_2+hii=>shhi} $x$ is even strongly hyperhyperimmune. Then, $x$ is not bi-immune, according to Lemma \ref{lem:shhi=>not-bi-immune}. With Corollary~\ref{kor:bi-immune} the assertion follows.
\end{proof}

Soare has shown~\cite[Theorem 3.1]{Soa1969} that there exists a cohesive set $A\subseteq\IN$ such that $x_A$ is a left-computable number. Using Corollary \ref{corollary:hhi=>roc}, we obtain the following corollary.

\begin{corollary}
	There exists a reordered computable number which is cohesive.
\end{corollary}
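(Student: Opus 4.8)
The plan is to combine the two external facts already invoked in the surrounding text. By Soare's theorem \cite[Theorem 3.1]{Soa69a}, there exists a cohesive set $A \subseteq \IN$ such that $x_A$ is left-computable. It then suffices to verify that this $x_A$ is reordered computable. Since cohesiveness implies hyperhyperimmunity (this is one of the standard implications $(X+1)\implies(X)$ recorded after Definition \ref{defi:immunity-definitions}, specifically $(5)\implies(3)$ modulo the routine observation that a cohesive set cannot be split into infinitely many pairwise disjoint finite pieces each meeting it), the set $A$ is hyperhyperimmune, and hence $x_A$ is a hyperhyperimmune left-computable number. Corollary \ref{corollary:hhi=>roc} then immediately yields that $x_A$ is reordered computable, and being cohesive, $x_A$ is by definition a cohesive real number.

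First I would quote Soare's result to produce the witness set $A$. Second I would argue that $A$ is hyperhyperimmune: if $(W_{f(n)})_n$ were a computable sequence of pairwise disjoint finite sets each meeting $A$, then since $A$ is infinite it meets infinitely many of them, and one can uniformly enumerate a c.e.\ set that splits $A$ into two infinite halves (e.g.\ take the union of the $W_{f(n)}$ for even $n$ in the subsequence of indices that actually meet $A$), contradicting cohesiveness; more simply, one cites the known implication that every cohesive set is hyperhyperimmune. Third, since $x_A$ is left-computable and hyperhyperimmune, Corollary \ref{corollary:hhi=>roc} gives that $x_A$ is reordered computable. Finally, $x_A$ is cohesive by construction, so it is the desired example.

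I do not expect any genuine obstacle here: the corollary is a straightforward concatenation of Soare's construction, the textbook implication cohesive $\implies$ hyperhyperimmune, and the already-proved Corollary \ref{corollary:hhi=>roc}. The only point that requires a sentence of care is making explicit why a cohesive set is hyperhyperimmune, since the paper states the chain of immunity implications only for sets in general and one wants to be sure the relevant direction is the one being used; but this is standard (see \cite[Chapter XI.1]{Soa1987}) and needs no new argument.

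\begin{proof}
By Soare \cite[Theorem 3.1]{Soa69a} there is a cohesive set $A \subseteq \IN$ such that $x_A$ is left-computable. Every cohesive set is hyperhyperimmune (see \cite[Chapter XI.1]{Soa1987}): indeed, if $(W_{f(n)})_n$ were a computable sequence of pairwise disjoint finite sets with $A \cap W_{f(n)} \neq \emptyset$ for all $n$, then $\bigcup_{n \text{ even}} W_{f(n)}$ would be a computably enumerable set $W_e$ with both $A \cap W_e$ and $A \cap \overline{W_e}$ infinite, contradicting cohesiveness. Hence $x_A$ is a left-computable hyperhyperimmune number, so by Corollary \ref{corollary:hhi=>roc} it is reordered computable. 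Since $A$ is cohesive, $x_A$ is a cohesive real number.
\end{proof}
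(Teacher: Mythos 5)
Your proof is correct and follows essentially the same route as the paper: Soare's cohesive set $A$ with $x_A$ left-computable, the standard implication from cohesiveness down to hyperhyperimmunity, and then Corollary \ref{corollary:hhi=>roc}. The extra inline verification that cohesive implies hyperhyperimmune is sound (the union over even indices does split $A$ into two infinite c.e.-separated parts) and merely makes explicit what the paper leaves to the implication chain stated after Definition \ref{defi:immunity-definitions}.
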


In the remainder of this section we will show that, restricted to the reordered computable numbers, the notions cohesive, hyperhyperimmune, hyperimmune and immune form a proper hierarchy. For arbitrary sets $A, B\subseteq\IN$ the set $A\oplus B\subseteq\IN$ is defined by
$
	A\oplus B := \{2n\mid n\in A\} \cup \{2n+1\mid n\in B\}.
$

\begin{proposition}[Rolletschek \cite{Rol1983}]\label{prop:closure-self-join}
	Let $A \subseteq \IN$ be an infinite set, and let $B := A \oplus A$.
	\begin{itemize}
		\item $A$ is immune if and only if $B$ is immune.
		\item $A$ is hyperimmune if and only if $B$ is hyperimmune.
		\item $A$ is strongly hyperhyperimmune if and only if $B$ is strongly hyperhyperimmune.
	\end{itemize}
\end{proposition}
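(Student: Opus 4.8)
The plan is to prove each of the three equivalences by transporting sets — or, for hyperimmunity, functions — through one of two computable maps: the ``doubling'' map $k\mapsto\{2k,2k+1\}$, which carries elements of $A$ to elements of $B:=A\oplus A$, and the projection $\phi\colon\IN\to\IN$, $\phi(m):=\lfloor m/2\rfloor$, which satisfies $m\in B\iff\phi(m)\in A$ and is computable and at most $2$-to-one. Doubling is injective, so it preserves disjointness and cardinality; hence the direction ``$A$ fails property $P$ $\Rightarrow$ $B$ fails property $P$'' will be routine for each $P$ considered here. The projection direction is where the $2$-to-one collapse must be handled, and only for strong hyperhyperimmunity will this require real work.

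\emph{Immune and hyperimmune.} For immunity: if $S\In B$ is infinite and c.e.\ then $\phi(S)\In A$ is c.e.\ and infinite (because $\phi$ is at most $2$-to-one), and conversely $C\In A$ infinite and c.e.\ gives $\{2k\mid k\in C\}\In B$ infinite and c.e.; so $A$ is immune iff $B$ is. For hyperimmunity I would compare the principal functions: writing $A=\{a_0<a_1<\cdots\}$ one gets $p_B(2n)=2p_A(n)$ and $p_B(2n+1)=2p_A(n)+1$. If a computable $g$ satisfies $p_B(n)\le g(n)$ for all $n$, then $n\mapsto\lfloor g(2n)/2\rfloor$ is computable and bounds $p_A$; if a computable $h$ bounds $p_A$, then the computable function sending $2n\mapsto 2h(n)$ and $2n+1\mapsto 2h(n)+1$ bounds $p_B$. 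Hence $A$ is hyperimmune iff $B$ is.

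\emph{Strongly hyperhyperimmune.} The easy direction first: from a uniformly computable pairwise disjoint family $(W_{f(n)})_n$ with $A\cap W_{f(n)}\ne\emptyset$ for all $n$, the sets $\{2k,2k+1\mid k\in W_{f(n)}\}$ form a uniformly computable pairwise disjoint family each meeting $B$, so $A$ not strongly hyperhyperimmune implies $B$ not strongly hyperhyperimmune. For the converse, start from a uniformly computable pairwise disjoint family $(W_{f(n)})_n$ with $W_{f(n)}\cap B\ne\emptyset$ for all $n$, and set $D_n:=\phi(W_{f(n)})$. Then $(D_n)_n$ is uniformly computable, each $D_n$ meets $A$, and every natural number lies in at most two of the $D_n$, since $2k$ and $2k+1$ each lie in at most one $W_{f(n)}$. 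The aim is to refine $(D_n)_n$ into a uniformly computable \emph{pairwise disjoint} family that still meets $A$ everywhere, contradicting the assumption on $A$.

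This refinement is the heart of the matter and the step I expect to be the main obstacle, because projection does not preserve disjointness: two disjoint sets upstairs can project to sets sharing a natural number, and a careless resolution of the overlaps — for example, giving a shared $k$ to whichever of its at most two ``owners'' first enumerates a $\phi$-preimage of $k$ — yields disjoint sets $V_n\In D_n$ but may delete from some $V_n$ all of its elements of $A$. The way I would attack it is through the ``conflict graph'' $G$ on the index set, with $n$ adjacent to $m$ precisely when $D_n\cap D_m\ne\emptyset$: this graph is computably enumerable, and the task is to show that the hypothesis that $A$ is strongly hyperhyperimmune — in particular immune, which forces the sets $D_n$ to be sufficiently spread out — makes $G$ tractable enough (for instance, effectively finitely colourable, or with an effectively listable family of connected components) that one colour class, or the family obtained by amalgamating each component of $G$, gives a uniformly computable pairwise disjoint subfamily each of whose members meets $A$. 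Turning this structural claim about $G$ into a rigorous argument is exactly the non-routine content; I would expect the write-up of this direction to follow Rolletschek~\cite{Rol1983}.
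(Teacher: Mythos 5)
Your treatment of the first two items is correct and complete: the immunity argument via the $2$-to-one projection $\phi$ and the doubling map is sound, and the computation $p_B(2n)=2p_A(n)$, $p_B(2n+1)=2p_A(n)+1$ settles hyperimmunity in both directions. The easy direction of the third item ($A$ not strongly hyperhyperimmune $\Rightarrow$ $B$ not strongly hyperhyperimmune) is also fine. For what it is worth, the paper itself does not prove this proposition at all; it is imported from Rolletschek, so there is no in-paper argument to compare against. But judged on its own, your attempt does not establish the one direction that the paper actually relies on later (namely $A$ strongly hyperhyperimmune $\Rightarrow$ $A\oplus A$ strongly hyperhyperimmune, used to produce a reordered computable number that is strongly hyperhyperimmune but not cohesive), and you say so yourself: the refinement of $(D_n)_n$ into a pairwise disjoint family is left as a hoped-for consequence of a structural claim about the conflict graph $G$ that you do not prove.

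The gap is not merely a missing write-up; the strategy you sketch appears unworkable as stated. In the definition of strong hyperhyperimmunity the sets $W_{f(n)}$ may be infinite, so $G$ need not have finite degree: for instance, if for each pair $n<m$ a fresh number $k_{n,m}$ satisfies $2k_{n,m}\in W_{f(n)}$ and $2k_{n,m}+1\in W_{f(m)}$, then every two of the projections $D_n$ intersect, $G$ is an infinite complete graph, every colour class is a singleton (up to one index), and amalgamating the single connected component collapses the whole family to one set --- in neither case do you recover an infinite pairwise disjoint family. Nor does the immunity of $A$ ``spread out'' the $D_n$ in any useful sense: each $D_n$ need only meet $A$ in a single point, while all the pairwise overlaps responsible for the edges of $G$ can lie entirely in $\overline{A}$, so immunity places no constraint on $G$ whatsoever. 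A correct proof of this direction has to exploit the hypothesis on $A$ in a more essential way than resolving overlaps combinatorially (note also that strong hyperhyperimmunity only guarantees, for each disjoint array, \emph{some} index missing $A$, and one cannot computably thin an array down to the non-computable set of indices where the other half $Q_n$ meets $A$). As it stands, the third equivalence rests on the citation to Rolletschek rather than on your argument.
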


\begin{lemma}\label{lem:self-join}
	Let $A \subseteq \IN$ be an infinite set, and let $B := A \oplus A$.
	\begin{enumerate}[(1)]
		\item $B$ is not cohesive.
		\item If $x_A$ is left-computable, then $x_B$ is reordered computable.
	\end{enumerate}
\end{lemma}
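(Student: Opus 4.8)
I need to prove two things about $B := A \oplus A$ for an infinite set $A$: (1) $B$ is not cohesive, and (2) if $x_A$ is left-computable, then $x_B$ is reordered computable.

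For part (1), I would use the computably enumerable (in fact, computable) set $E$ of even numbers, $W_e = \{2n \mid n \in \IN\}$ for an appropriate index $e$. Then $B \cap W_e = \{2n \mid n \in A\}$ and $B \cap \overline{W_e} = \{2n+1 \mid n \in A\}$. Since $A$ is infinite, both of these sets are infinite. Hence $B$ is not cohesive. This is immediate.

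For part (2), I would argue by producing a computable name for $x_B$ that witnesses reordered computability. Let me compute $x_B$ in terms of $x_A$: if $j \in A$ contributes $2^{-(j+1)}$ to $x_A$, then $2j \in B$ contributes $2^{-(2j+1)}$ and $2j+1 \in B$ contributes $2^{-(2j+2)}$ to $x_B$, so $x_B = \sum_{j \in A}(2^{-(2j+1)} + 2^{-(2j+2)}) = \sum_{j\in A} 3 \cdot 2^{-(2j+2)}$. Rather than manipulate this directly, the cleaner route is: since $x_A$ is left-computable, fix a computable increasing sequence $(a_n)_n$ of dyadic rationals with $a_0 = 0$ converging to $x_A$. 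Then the differences $a_{n+1} - a_n$ are nonnegative dyadic rationals; write each as a finite sum of distinct powers $2^{-(j+1)}$, $j \in D_n$ with $D_n$ finite, and note $\sum_n \sum_{j \in D_n} 2^{-(j+1)} = x_A$. Under the map $j \mapsto \{2j, 2j+1\}$ this gives a decomposition of $x_B$ into powers of two; concatenating these over all $n$ yields a computable name $f$ for $x_B$. The key point is to control $\sum_m u_f(m)\cdot 2^{-m}$: each index $j$ used in naming $x_A$ gets replaced by two indices $2j$ and $2j+1$, and — crucially — because $B = A \oplus A$ distributes contributions into the even and odd positions, the multiplicities $u_f$ stay bounded in a controlled way, so that $\sum_m u_f(m) 2^{-m}$ is comparable to $\sum_j u_g(j) 2^{-(2j)}$ for the analogous name $g$ of $x_A$, which converges (absolutely, even without computable convergence of a name for $x_A$) because it is dominated by a geometric-type series. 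I expect that a direct tail estimate $\sum_{m > 2N} u_f(m) 2^{-m} \leq C \cdot 2^{-N}$ falls out because the tail is governed by how much of $x_A$ remains to be approximated past some computable stage, which we control since $(a_n)_n$ converges (even if not computably) — but here we only need left-computability of $x_A$, so we must be careful to use only the convergence of $(a_n)$ and the structure of the $\oplus$ operation, not computable convergence.

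The main obstacle will be making the convergence of $\sum_m u_f(m) 2^{-m}$ \emph{computable} using only that $x_A$ is left-computable, not computable. The trick is that the $\oplus$ construction is exactly what rescues this: a carry at a low bit position of $x_B$'s approximation corresponds to a carry at half that position in $x_A$'s approximation, and the total mass available to generate such carries past position $2N$ is bounded by $x_B \cdot 2^{-N}$-type quantities that we can estimate a priori. Concretely, I anticipate using an argument parallel to the $\impliedby$-direction of Theorem~\ref{theorem:main-eq}, or else directly showing that $\overline{x_B}$-type immunity fails: actually the slickest path may be to invoke Proposition~\ref{prop:closure-self-join} — if $x_A$ left-computable forced $x_A$ (equivalently $A$) to be, say, not strongly hyperhyperimmune we could conclude, but that is false in general. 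So I will commit to the explicit name construction and the tail estimate, organizing it as: (i) build $f$ from the dyadic approximation of $x_A$; (ii) verify $x_B = \sum_k 2^{-f(k)}$; (iii) prove $\sum_{m > 2N+2} u_f(m) 2^{-m} \leq 4 \cdot 2^{-N}$ (or similar) by bounding, at each stage, the number of times a carry can cross position $2N+2$ in the approximation of $x_B$ in terms of the bounded total increase $x_B \leq 1$, exactly mirroring the carry-counting bookkeeping in the proof of Theorem~\ref{theorem:main-eq}.
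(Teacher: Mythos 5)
Part (1) is correct and is essentially the paper's own argument. Part (2) is where the problems lie. Your name construction takes an arbitrary dyadic decomposition $D_n$ of each increment $a_{n+1}-a_n$ and applies $j\mapsto\{2j,2j+1\}$, i.e., replaces $2^{-(j+1)}$ by $2^{-(2j+1)}+2^{-(2j+2)}=\tfrac{3}{4}\cdot 4^{-j}$. This map is not additive: two copies of $2^{-(j+1)}$ sum to $2^{-j}$, whose image would be $3\cdot 4^{-j}$, while the two images sum to only $\tfrac{3}{2}\cdot 4^{-j}$. Since the sets $D_n$ are not the elements of $A$ (the same $j$ recurs across stages and carries occur), your series converges to $\tfrac{3}{4}\sum_n\sum_{j\in D_n}4^{-j}$, which in general differs from $x_B=\tfrac{3}{4}\sum_{j\in A}4^{-j}$; so $f$ is not a name of $x_B$. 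The fix is to double the bits of the partial sums themselves, i.e., write $a_n=x_{A_n}$ with $A_n$ finite and set $b_n:=x_{A_n\oplus A_n}$, then decompose the increments $b_{n+1}-b_n$. Your second difficulty, the computable tail bound, is also not resolved by the reason you give: "how much of $x_A$ remains to be approximated" admits no computable estimate when $x_A$ is merely left-computable. The correct reason a direct argument succeeds is combinatorial: the length-$2N$ prefix of $b_t$ is determined by the length-$N$ prefix of $a_t$, and a length-$N$ string can change at most $2^N$ times along an increasing approximation; hence at most about $2^N$ carries cross position $2N$, giving $\sum_{m>2N}u_f(m)\cdot 2^{-m}\leq(2^N+1)\cdot 2^{-2N}\leq 2^{-N+1}$. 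With those two repairs your direct construction can be completed, but as written it both computes the wrong real and lacks the key counting step.

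For comparison, the paper's proof of (2) is a one-liner that sidesteps all of this: since $B=A\oplus A$, the bits of $x_B$ at positions $2j$ and $2j+1$ always agree, so its binary expansion avoids the substring $010$ (and $101$); a Kurtz random sequence must contain every finite string, so $x_B$ is not Kurtz random, hence not strongly Kurtz random, and Theorem~\ref{theorem:main-eq} (together with the evident left-computability of $x_B$) yields that $x_B$ is reordered computable. You correctly sensed that routing through the main theorem is the intended shortcut, but the relevant hypothesis to verify is failure of (strong) Kurtz randomness of $x_B$, not an immunity property of $A$.
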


\begin{proof}
	\begin{enumerate}[(1)]
		\item Since $A$ is infinite, $B$ contains infinitely many even numbers and infinitely many odd numbers. Therefore, $B$ is not cohesive.
		\item It is clear that $x_B$ is left-computable as well. Since $x_B$ avoids the substring $010$ (and the substring $101$ as well), this number is not Kurtz random, hence, not strongly Kurtz random, hence, reordered computable, by Theorem~\ref{theorem:main-eq}.
		\qedhere
	\end{enumerate}
\end{proof}

\begin{theorem}
	There exists a reordered computable number which is strongly hyperhyperimmune but not cohesive.
\end{theorem}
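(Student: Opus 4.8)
The plan is to combine the earlier building blocks. I would start from a left-computable set $A$ that is strongly hyperhyperimmune — for instance one can take a cohesive left-computable set, as furnished by Soare's theorem cited just above, since cohesive sets are strongly hyperhyperimmune among computably approximable sets by the chain of implications and Lemma~\ref{lem:Delta^0_2+hii=>shhi} (a cohesive set is in particular hyperhyperimmune). Then I would set $B := A \oplus A$ and claim $B$ witnesses the theorem.

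The verification splits into three parts. First, $B$ is strongly hyperhyperimmune: this is exactly the third bullet of Proposition~\ref{prop:closure-self-join} applied to $A$. Second, $B$ is not cohesive: this is part~(1) of Lemma~\ref{lem:self-join}, since $B$ meets both the evens and the odds infinitely often. Third, $x_B$ is reordered computable — but the theorem statement only asks for strong hyperhyperimmunity and non-cohesiveness, so strictly speaking this third point is a bonus; still, I would mention it because it is immediate from part~(2) of Lemma~\ref{lem:self-join} once we know $x_A$ is left-computable. One should double-check the convention by which "$B$ is strongly hyperhyperimmune" as a property of the real number $x_B$ versus the set $B$: here we want the set-theoretic statement (the theorem is phrased about a number being "strongly hyperhyperimmune" in the sense that some representing set has the property), so choosing the specific set $B = A \oplus A$ is what we need, and since $B$ is infinite and $x_B$ is not dyadic the binary expansion is unambiguous.

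The only genuine subtlety — the step I would treat most carefully — is arranging at the outset a single left-computable set $A$ that is simultaneously infinite and strongly hyperhyperimmune, so that Proposition~\ref{prop:closure-self-join} has a valid input. Soare's cohesive left-computable set does the job: cohesive sets are infinite by definition, a cohesive set is hyperhyperimmune (if $(W_{f(n)})_n$ were pairwise disjoint finite sets each meeting $A$, then $A$ would be split into two infinite pieces by, say, $\bigcup_n W_{f(2n)}$, contradicting cohesiveness), and $x_A$ left-computable means $A$ is computably enumerable hence computably approximable, so Lemma~\ref{lem:Delta^0_2+hii=>shhi} upgrades hyperhyperimmunity to strong hyperhyperimmunity. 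With that $A$ in hand, the three verifications above are one-line appeals to the cited results, so the write-up is short.

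\begin{proof}
By Soare~\cite[Theorem 3.1]{Soa69a} there is a cohesive set $A \subseteq \IN$ with $x_A$ left-computable. Being cohesive, $A$ is infinite and hyperhyperimmune; since $x_A$ is left-computable, $A$ is computably enumerable, hence computably approximable, so by Lemma~\ref{lem:Delta^0_2+hii=>shhi} the set $A$ is strongly hyperhyperimmune. Put $B := A \oplus A$. By Proposition~\ref{prop:closure-self-join} the set $B$ is strongly hyperhyperimmune, and by Lemma~\ref{lem:self-join}(1) it is not cohesive. Moreover, by Lemma~\ref{lem:self-join}(2), $x_B$ is reordered computable. Hence $x_B$ is a reordered computable number which is strongly hyperhyperimmune but not cohesive.
\end{proof}
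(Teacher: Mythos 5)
Your construction is exactly the paper's: take Soare's cohesive left-computable set $A$, observe it is strongly hyperhyperimmune, and apply Proposition~\ref{prop:closure-self-join} and Lemma~\ref{lem:self-join} to $B := A \oplus A$. One correction to your justification: the claim that ``$x_A$ left-computable implies $A$ computably enumerable'' is false --- the paper explicitly notes (citing Jockusch via Soare) that this converse fails; what is true is that $x_A$ left-computable implies $x_A$, and hence $A$, is computably approximable, which is all you need for Lemma~\ref{lem:Delta^0_2+hii=>shhi}. In fact the whole detour through Cooper's lemma is unnecessary here, since cohesive implies strongly hyperhyperimmune directly by the standard implication chain $(X+1) \implies (X)$ recorded after Definition~\ref{defi:immunity-definitions}, which is how the paper argues.
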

	
\begin{proof}
As above, let $A \subseteq \IN$ be a cohesive set such that $x_A$ is left-computable \cite[Theorem 3.1]{Soa1969}.
In particular, $A$ is strongly hyperhyperimmune. Then, according to Proposition~\ref{prop:closure-self-join} and Lemma~\ref{lem:self-join}, the set $B := A \oplus A$ is strongly hyperhyperimmune but not cohesive, and $x_B$ is reordered computable.
\end{proof}

According to Lemma \ref{lem:Delta^0_2+hii=>shhi}, every computably approximable number which is hyperhyperimmune is even strongly hyperhyperimmune. Therefore, these two notions coincide among the reordered computable numbers. The next theorem strengthens \cite[Theorem 4]{HJ2025}. In its proof we shall use the notion of weak $1$-genericity.

\begin{definition} \;
	\begin{itemize}
		\item A subset $S\subseteq\SigmaN$ is called \emph{c.e.~open} if there is a c.e. set $L\subseteq\SigmaS$ such that $S=\{x\in\SigmaN\mid (\exists\sigma\in L) \, \sigma$ is a prefix of $x\}$.
		\item A binary sequence $x\in\SigmaN$ is called \emph{weakly $1$-generic} if every dense and c.e. open subset of $\SigmaN$ contains $x$.
	\end{itemize}
\end{definition}

\begin{remark}\label{remark:weakly-1-generic}
	It is easy to see that if $A \subseteq \IN$ is a set which is weakly $1$-generic, $\overline{A}$ is weakly $1$-generic as well. Furthermore, every weakly $1$-generic set is hyperimmune \cite[Lemma 1.8.48]{Nie2009}. Hoyrup showed \cite[Proposition 4.3.2]{Hoy2017} that a left-computable number which is nearly computable but not computable (see Remark \ref{remark:lnc}) is weakly $1$-generic.
\end{remark}

\begin{theorem}
	There exists a reordered computable number which is hyperimmune but not hyperhyperimmune.
\end{theorem}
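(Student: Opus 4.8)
The plan is to construct a left-computable number $x$ whose binary expansion, viewed as a set $A \subseteq \IN$, is hyperimmune but not hyperhyperimmune, and then invoke Theorem~\ref{theorem:main-eq} to conclude it is reordered computable. The natural candidate, suggested by Remark~\ref{remark:weakly-1-generic}, is to start from a left-computable number $y$ which is nearly computable but not computable; by Hoyrup's result $y$ is weakly $1$-generic, hence hyperimmune, and by Remark~\ref{remark:lnc} it is strongly Kurtz random, so it is \emph{not} reordered computable on its own. The idea is to combine $y$ with a suitable ``padding'' operation that destroys hyperhyperimmunity (and simultaneously kills strong Kurtz randomness, so that the result becomes reordered computable) while preserving hyperimmunity.

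Concretely, I would take the set $A$ associated to $y$ and form a new set $B$ by interleaving $A$ with a fixed computable pattern in a way that forces $B$ to meet each member of an explicit computable sequence of pairwise disjoint finite sets — thereby witnessing non-hyperhyperimmunity — but does so ``sparsely enough'' that the principal function of $B$ is still not dominated by any computable function. A clean way to arrange this: partition $\IN$ into consecutive blocks $I_k$ of rapidly growing length, use the even-indexed blocks to carry a scaled copy of $A$ (preserving hyperimmunity, since the gaps in $A$ blow up) and use the odd-indexed blocks to place a single forced element in a predetermined finite window $W_{f(k)}$, so $(W_{f(k)})_k$ is the required sequence. One must check: (i) $x_B$ remains left-computable — this follows since $B$ is obtained computably from the left-computable approximation of $y$ together with a computable pattern; (ii) $B$ is hyperimmune — because the principal function of $A$ is not computably dominated and the block structure only stretches it further; (iii) $B$ is not hyperhyperimmune — by the explicit disjoint finite sets $(W_{f(k)})_k$ each meeting $B$; and (iv) $x_B$ is reordered computable — by Theorem~\ref{theorem:main-eq}, since a set that is not hyperhyperimmune is in particular not immune on the relevant side and, more directly, the forced computable pattern makes $x_B$ not Kurtz random (as in the proof of Lemma~\ref{lem:self-join}, avoiding some substring or meeting a computable covering test), hence not strongly Kurtz random.

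The main obstacle I anticipate is simultaneously guaranteeing (ii) and (iii): hyperimmunity says the positions of elements of $B$ grow uncontrollably fast, while non-hyperhyperimmunity requires $B$ to reliably hit each of infinitely many prescribed small finite sets, which pulls in the opposite direction. The resolution is to keep these two requirements on disjoint ``coordinates'': the hyperimmune behaviour lives on a sparse computable subsequence of blocks carrying $A$, and the finite sets $W_{f(k)}$ are placed so far out along the remaining blocks that hitting them costs nothing in terms of domination — every computable function is already beaten on the $A$-blocks alone. A secondary point to handle carefully is that one wants $A$ (equivalently $y$) to be \emph{infinite and co-infinite} so that the self-join-style arguments and the block interleaving make sense; weak $1$-genericity of both $A$ and $\overline{A}$ (Remark~\ref{remark:weakly-1-generic}) takes care of this. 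With these precautions the verification of (i)–(iv) should be routine, and the theorem follows by assembling them.
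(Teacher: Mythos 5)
Your starting point (a left-computable, nearly computable, non-computable $y$, which is weakly $1$-generic and hence hyperimmune) is the same as the paper's, but the mechanism you propose for destroying hyperhyperimmunity does not work. You plan to make $B$ meet each member of an \emph{explicit} computable sequence of pairwise disjoint ``predetermined finite windows'' $(W_{f(k)})_k$. Such a sequence is a disjoint \emph{strong} array: the sets and in particular their maxima are computable from $k$. By the classical array characterization of hyperimmunity, if $B$ meets every member of a computable disjoint sequence of canonically given finite sets, then $p_B(n) \leq \max\bigl(\bigcup_{j\leq n} W_{f(j)}\bigr)$ is a computable bound on the principal function, so $B$ is \emph{not} hyperimmune. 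Your proposed resolution --- keeping the $A$-blocks and the window-blocks on ``disjoint coordinates'' --- cannot help: hyperimmunity is a property of the whole set $B$, and a single strong array meeting $B$ anywhere already defeats it. The extra content of hyperhyperimmunity over hyperimmunity lies precisely in allowing the finite sets to be given only as c.e.\ sets with no computable bound on their elements; any construction that hits \emph{predetermined} windows proves too much. (A secondary slip: ``not hyperhyperimmune implies not immune on the relevant side'' is not a valid implication; only your fallback argument via a forced pattern killing Kurtz randomness is sound for step (iv).)

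The paper avoids this trap by never exhibiting a weak array at all. It argues indirectly: since $y$ is weakly $1$-generic, $\overline{A}$ is hyperimmune, hence immune, so by Lemma~\ref{lem:shhi=>not-bi-immune} (contrapositive) $A$ is not strongly hyperhyperimmune; then $B := A \oplus A$ is still hyperimmune and still not strongly hyperhyperimmune by Proposition~\ref{prop:closure-self-join}, and since $B$ is computably approximable, Cooper's Lemma~\ref{lem:Delta^0_2+hii=>shhi} upgrades this to ``not hyperhyperimmune''. Finally $x_B$ is reordered computable by Lemma~\ref{lem:self-join} because it avoids the substring $010$. If you want to rescue your block construction, you would have to replace the predetermined windows by c.e.\ sets whose extent is tied non-computably to the approximation of $A$, which is a genuinely harder construction than the one you sketch; as written, the proposal fails at requirement (ii).
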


\begin{proof}
	Let $A \subseteq \IN$ be such that the number $x_A$ is left-computable and nearly computable but not computable. Then, $x_A$ is weakly $1$-generic, hence both $A$ and $\overline{A}$  are hyperimmune (see Remark \ref{remark:weakly-1-generic}). In particular, $\overline{A}$ is immune. Then, according to Lemma \ref{lem:shhi=>not-bi-immune}, $A$ is not strongly hyperhyperimmune.

Now let $B := A \oplus A$. Then, $B$ is hyperimmune but not strongly hyperhyperimmune, due to Proposition \ref{prop:closure-self-join}. Since $B$ is computably approximable, this set is not hyperhyperimmune either. However, $x_B$ is reordered computable, according to Lemma \ref{lem:self-join}.
\end{proof}

The following theorem was stated already in~\cite[Theorem 5]{HJ2025}, and a direct construction was sketched there. But the following proof is much simpler. For the definition of the regular numbers the reader is referred to Section~\ref{section:regular}.

\begin{theorem}[{\cite[Theorem 5]{HJ2025}}]\label{theorem:roc+immune+non-hi}
	There exists a reordered computable number which is immune but not hyperimmune and not regular.
\end{theorem}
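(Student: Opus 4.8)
The plan is to exhibit a single explicit number built from (the binary expansion of) Chaitin's $\Omega$, rather than to run a priority construction as in \cite{HJ25}; the new Theorem~\ref{theorem:main-eq} is precisely what makes this cheap, since it reduces ``reordered computable'' to ``left-computable and not strongly Kurtz random''. Let $\Omega\in(0,1)$ be a left-computable Martin-L\"of random real and let $A_0\subseteq\IN$ be its (unique) binary expansion; recall that $A_0$ is bi-immune, because every Martin-L\"of random, hence Kurtz random, set is bi-immune. Define $S\subseteq\IN$ by \emph{complementary interleaving}:
\[
	2n\in S\iff n\in A_0,\qquad 2n+1\in S\iff n\notin A_0 .
\]
Thus $S$ contains exactly one of the numbers $2n,2n+1$ for every $n$, and $\overline S$ has the same form with $\overline{A_0}$ in place of $A_0$. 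The claim is that $x:=x_S$ has all the required properties.

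First I would check that $x$ is left-computable: a short computation gives $x=\frac13+\frac14\sum_{n\in A_0}4^{-n}$, and since ``spreading'' the binary expansion of $\Omega$ apart with extra zeros is order-preserving and effective on rational approximations from below, $\sum_{n\in A_0}4^{-n}$, and hence $x$, is left-computable. Next, $x$ avoids the substring $111$ (and $000$): any three consecutive bit positions contain a pair $\{2n,2n+1\}$, and on such a pair the bits of $x$ are complementary, so three consecutive bits are never all $1$. Exactly as in the proof of Lemma~\ref{lem:self-join}---invoking the characterization of non-Kurtz-randomness by strong Kurtz tests whose cardinality function is computable---this gives that $x$ is not Kurtz random, hence not strongly Kurtz random; with left-computability and Theorem~\ref{theorem:main-eq} it follows that $x$ is reordered computable. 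For immunity: if $W\subseteq S$ is computably enumerable, then $\{n:2n\in W\}$ and $\{n:2n+1\in W\}$ are computably enumerable subsets of $A_0$ and of $\overline{A_0}$ respectively, hence finite by bi-immunity of $A_0$, so $W$ is finite; the same argument for $\overline S$ shows that $x$ is in fact bi-immune, in particular immune. For non-hyperimmunity: since $S$ meets each pair $\{2k,2k+1\}$ exactly once, $|S\cap\{0,\dots,2k+1\}|=k+1$, so $p_S(k)\le 2k+1$, a computable bound. Finally, $S$ is infinite and co-infinite, so $x$ is not dyadic and $S$ is its binary expansion; being bi-immune, $x$ is not regular by the fact established in Section~\ref{section:regular} that regular reals cannot be bi-immune (alternatively: $S$ is immune, hence not computably enumerable, hence $x$ is not strongly left-computable, and regular reals are strongly left-computable).

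I do not expect a serious obstacle once Theorem~\ref{theorem:main-eq} is available; the only conceptual point---and the reason the earlier argument was a direct construction---is that ``immune'' and ``reordered computable'' look antagonistic, the latter being a tameness condition. The resolution is that complementary interleaving of a bi-immune left-computable real produces a sequence whose binary expansion is combinatorially trivial (it avoids $111$, so it is not even Kurtz random) while its bit set stays bi-immune; the one thing that needs a careful (but routine) check is that left-computability survives the passage from $\Omega$ to $x$ even though $S$ is far from computably enumerable, which is the order-preservation-under-spreading observation mentioned above.
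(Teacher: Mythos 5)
Your construction is correct and is in spirit the paper's construction: start from the binary expansion $A_0$ of a left-computable Martin-L\"of random real and interleave it with itself to kill randomness while preserving immunity. The paper takes $B:=A_0\oplus A_0$; you take the complementary interleaving $S$ with $2n\in S\iff n\in A_0$ and $2n+1\in S\iff n\notin A_0$, i.e.\ $A_0\oplus\overline{A_0}$. Your variant buys a little: non-hyperimmunity is structural ($p_S(k)\le 2k+1$ because $S$ meets every pair $\{2k,2k+1\}$ exactly once), so you need neither the fact that the expansion of a left-c.e.\ Martin-L\"of random real is non-hyperimmune nor Rolletschek's closure property (Proposition~\ref{prop:closure-self-join}); likewise immunity (indeed bi-immunity) of $S$ follows by a two-line argument from bi-immunity of $A_0$. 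The forbidden-substring argument ($111$ and $000$ for you, $010$ and $101$ in Lemma~\ref{lem:self-join}) and the appeal to Theorem~\ref{theorem:main-eq} are identical. For non-regularity the paper argues via Kolmogorov complexity ($K(x\restriction n)$ grows linearly, against Corollary~\ref{corollary:regular-Kolmogorovcomplexity}), while you argue via bi-immunity and Theorem~\ref{theorem:regular-not-biimmune}; both are valid. Your left-computability check (the order-preserving ``spreading'' of lower approximations) is the same issue the paper sweeps into ``it is clear that $x_B$ is left-computable'' in Lemma~\ref{lem:self-join}, and your sketch of it is sound.

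One genuine error, though it is confined to a redundant alternative: you claim parenthetically that regular reals are strongly left-computable, so that immunity of $S$ (hence non-c.e.-ness) would rule out regularity. That is false: a regular real is by definition a \emph{finite sum} of strongly left-computable reals, and the class of strongly left-computable reals is not closed under addition --- that is precisely why Wu introduced the larger class. By Wu's theorem the expansion of a regular real lies in the Boolean algebra generated by the c.e.\ sets, which is strictly larger than the class of c.e.\ sets. Drop that parenthesis; your primary argument via Theorem~\ref{theorem:regular-not-biimmune} is the one to keep.
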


\begin{proof}
	Let $A \subseteq \IN$ be such that $x_A$ is a left-computable number which is Martin-Löf random. Then $A$ is immune but not hyperimmune. Let $B := A \oplus A$. Then, $x_B$ is a reordered computable number which is immune but not hyperimmune, according to Lemma \ref{lem:self-join} and Proposition \ref{prop:closure-self-join}. Furthermore, $x_B$ is not regular, since its Kolmogorov complexity grows linearly, while the Kolmogorov complexity of regular numbers is logarithmic (Proposition~\ref{proposition:regular-Kolmogorovcomplexity}).
\end{proof}

\section{Effective dimensions}
\label{section:effectivedimension}

After the presentation of the results in \cite{HJ2025} at the CiE 2025 conference in Lisbon, the second author was asked by Alexander Shen what are the possible effective Hausdorff dimensions that a reordered computable number can have.

\begin{definition}
Let $x \in \SigmaN$ be a binary sequence.
\begin{itemize}
\item The value $\dim(x) := \liminf_{n \to \infty} \frac{K(x \restriction n)}{n}$ is called the \emph{effective Hausdorff dimension} of $x$.
\item The value $\mathrm{Dim}(x) := \limsup_{n \to \infty} \frac{K(x \restriction n)}{n}$ is called the \emph{effective packing dimension} of $x$.
\end{itemize}
\end{definition}

It is clear that for any binary sequence $x$ we have
\begin{equation*}
	0 \leq \dim(x)\leq\mathrm{Dim}(x) \leq 1.
\end{equation*}

We start with some examples of well-known classes of left-computable numbers $x$ showing various different possible combinations of the values $\dim(x)$ and $\mathrm{Dim}(x)$.

\begin{examples}
\label{examples:effdim} \;
\begin{enumerate} 
		\item
		Chaitin's constant is defined by
		$\Omega := \sum_{\sigma \in \dom(U)} 2^{-\left|\sigma\right|}$
		where $U$ is some optimal prefix-free machine. This is a left-computable number which is Martin-Löf random.
		Hence, we have $\dim(\Omega) = \mathrm{Dim}(\Omega) = 1$.
		\item
		Let $U$ be an optimal prefix-free machine and $s \in \left]0,1\right[$ be a computable number. Then, $\Omega^s := \sum_{\sigma \in \dom(U)}^{} 2^{-\frac{\left|\sigma\right|}{s}}$ is a left-computable number with $\dim(\Omega^s) = \mathrm{Dim}(\Omega^s) = s$, as shown by Tadaki \cite[Theorem 3.2]{Tad2002}.
		\item
		A binary sequence $x \in \SigmaN$ is called \emph{$K$-trivial} if there exists a constant $c \in \IN$ such that $K(x \restriction n) \leq K(0^n) + c$ for all $n \in \IN$. As $K(0^n)$ grows only logarithmically with growing $n$, we have $\dim(x)=\mathrm{Dim}(x)=0$. Downey, Hirschfeldt, Nies and Stephan \cite[Theorem 3.1]{DHNS2003} proved that there exists a strongly left-computable number which is $K$-trivial and not computable.
		\item
		A binary sequence $x \in \SigmaN$ is called \emph{infinitely often $K$-trivial} if there exists a constant $c \in \IN$ such that $K(x \restriction n) \leq K(0^n) + c$ for infinitely many $n \in \IN$. Using the same argument as in the third example, one can show that $\dim(x) = 0$ for an infinitely often $K$-trivial real $x$.
		
		A left-computable number $x$ is called \emph{regainingly approximable} \cite{HHJ2024} if there exists a computable increasing sequence $(x_n)_n$ of rational numbers converging to $x$ with $x - x_n \leq 2^{-n}$ for infinitely many $n \in \IN$. Hertling, Hölzl and Janicki \cite[Proposition 6.1]{HHJ2024} showed that all regainingly approximable numbers in $[0,1]$ are infinitely often $K$-trivial. Furthermore, they showed~\cite[Theorem 6.2]{HHJ2024} that there exists a regainingly approximable number $x \in [0,1]$ with $K(x \restriction n) > n$ for infinitely many $n \in \IN$. Thus, such a number $x$ satisfies $\dim(x) = 0$ and $\mathrm{Dim}(x) = 1$.
\end{enumerate}
\end{examples}

These examples and the following observations are our answer to Shen's question.

\begin{theorem}
\label{theorem:Dim<1-notstronglyKurtzrandom}
Let $x \in \SigmaN$ be a binary sequence with $\mathrm{Dim}(x) < 1$. Then $x$ is not strongly Kurtz random.
\end{theorem}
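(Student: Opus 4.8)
The plan is to use directly the definition of strong Kurtz randomness via prefix-free Kolmogorov complexity: I want to exhibit a computable function $r\colon\IN\to\IN$ with $K(x\restriction r(n))\le r(n)-n$ for every $n\in\IN$, which by definition shows that $x$ is not strongly Kurtz random. There is no need to go through the strong Kurtz test characterization of Theorem~\ref{satz:non-strongly-kurtz-random-strong-kurtz-test}; the quantitative bound on $K$ coming from $\mathrm{Dim}(x)<1$ is enough.

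First I would fix a rational number $q$ with $\mathrm{Dim}(x)<q<1$, which exists because $\mathrm{Dim}(x)=\limsup_{n\to\infty}K(x\restriction n)/n<1$ and the rationals are dense. By the definition of the limit superior there is a natural number $N\ge 1$ such that $K(x\restriction m)<q\cdot m$ for all $m\ge N$. Note that $q$ and $N$ are fixed constants depending only on $x$.

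Next I would define $r(n):=\max\{\,N,\ \lceil n/(1-q)\rceil\,\}$. For each fixed choice of the constants $q$ (rational) and $N$ this is clearly a computable function of $n$, even though $N$ itself is not obtained effectively from $x$; this is all that the definition of strong Kurtz randomness requires, since it only asks for the \emph{existence} of a suitable computable $r$. It then remains to verify the inequality. For every $n$ we have $r(n)\ge N$, hence $K(x\restriction r(n))<q\cdot r(n)$. We also have $r(n)\ge n/(1-q)$, and multiplying by $1-q>0$ gives $(1-q)\,r(n)\ge n$, that is, $q\cdot r(n)\le r(n)-n$. Combining the two estimates yields $K(x\restriction r(n))<r(n)-n$, and in particular $K(x\restriction r(n))\le r(n)-n$, as desired. (For $n=0$ this reads $K(x\restriction r(0))\le r(0)$, which holds since $r(0)=N\ge 1$ and $K(x\restriction r(0))<q\cdot r(0)<r(0)$.) Therefore $x$ is not strongly Kurtz random.

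I do not expect any real obstacle here; the only point worth stating carefully is the routine observation that a function assembled from finitely many non-uniform constants (here $q$ and $N$) is still computable, so the required computable $r$ genuinely exists. Everything else is the elementary arithmetic above. It may be worth adding that, combined with Theorem~\ref{theorem:main-eq}, this immediately gives that every left-computable number $x$ with $\mathrm{Dim}(x)<1$ is reordered computable.
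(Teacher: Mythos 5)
Your proof is correct and follows essentially the same route as the paper: extract from $\mathrm{Dim}(x)<1$ a constant $q<1$ with $K(x\restriction m)\le q\cdot m$ for all large $m$ (the paper takes $q=\tfrac{m+1}{m+2}$), and set $r(n)\approx n/(1-q)$ so that $q\cdot r(n)\le r(n)-n$. Your extra step of taking the maximum with $N$ is a slightly more careful handling of the ``for every $n$'' requirement, which the paper leaves implicit by concluding the inequality only for almost every $n$.
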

 
\begin{proof}
Due to $\mathrm{Dim}(x) < 1$, there exists a natural number $m \in \IN$ with $K(x \restriction n) \leq \frac{m+1}{m+2} \cdot n$ for almost all $n \in \IN$.
Define the function $r \colon \IN\to \IN$ by $r(n) := (m+2) \cdot n$, which is computable. Then, we obtain
\begin{equation*}
K(x \restriction r(n)) \leq \frac{m+1}{m+2} \cdot r(n) = (m+1) \cdot n = r(n) - n
\end{equation*}
for almost every $n \in \IN$. Therefore, $x$ is not strongly Kurtz random.
\end{proof}

\begin{corollary}\label{kor:Dim<1=>roc}
Every left-computable real $x\in [0,1]$ with $\mathrm{Dim}(x) < 1$ is reordered computable.
\end{corollary}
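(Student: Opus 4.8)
The plan is to obtain this as an immediate consequence of the theorem just proved together with the characterization in Theorem~\ref{theorem:main-eq}. First I would address the only point that needs a word of care, namely that $\mathrm{Dim}$ is a notion defined on binary sequences while the statement speaks of a real number $x$: if $x$ is dyadic, it is already reordered computable (as observed at the start of the proof of Theorem~\ref{theorem:main-eq}), so we may assume $x$ is not dyadic, and then $x$ has a unique binary expansion, to which $\mathrm{Dim}$ applies unambiguously.

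For this remaining case the argument is a one-line combination of known facts. Since $\mathrm{Dim}(x) < 1$, the preceding theorem gives that $x$ is not strongly Kurtz random. Because $x$ is left-computable by hypothesis, the $\impliedby$-direction of Theorem~\ref{theorem:main-eq} then yields that $x$ is reordered computable, which is what we want.

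I do not anticipate any genuine obstacle here: the work has all been done in the preceding theorem (which produces an explicit computable rescaling $r(n) := (m+2)\cdot n$ witnessing that $x$ is not strongly Kurtz random) and in the main theorem. The corollary is purely a matter of chaining these two implications, with the dyadic edge case handled separately only so that the notation in the statement is well-defined.
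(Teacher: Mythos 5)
Your proposal is correct and matches the paper's intent exactly: the corollary is stated without proof precisely because it is the immediate chaining of the preceding theorem ($\mathrm{Dim}(x)<1$ implies not strongly Kurtz random) with Theorem~\ref{theorem:main-eq}. The extra remark about dyadic reals is harmless and consistent with how the paper itself handles that edge case elsewhere.
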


\begin{theorem}
\label{theorem:dim1}
There exists a reordered computable real $x\in [0,1]$ with $\dim(x) = 1$.
\end{theorem}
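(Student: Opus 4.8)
The plan is to build $x$ by \emph{diluting} Chaitin's constant $\Omega$: keep all the bits of the binary expansion of $\Omega$, but insert increasingly many zeros at sparse, computably determined positions. Concretely, partition $\IN$ into consecutive blocks $B_0,B_1,\dots$ with $B_k$ of length $2^k$, and inside $B_k$ place $k$ zeros followed by the next $2^k-k$ bits of $\Omega$. Write $L_k:=2^k-1$ for the left endpoint of $B_k$ and $C_k:=\sum_{j\le k}j=k(k+1)/2$; then the number of inserted zeros below any $n\le L_{k+1}$ is at most $C_k$, and for $n=L_{k+1}$ it equals $C_k$ exactly. Let $m(n)$ denote the number of ``$\Omega$-positions'' below $n$, so that $x\restriction n$ contains the prefix $\Omega\restriction m(n)$ of $\Omega$. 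Then $m(n)\ge n-C_k\ge n-O((\log n)^2)$ whenever $n\le L_{k+1}$, and in particular $m(n)/n\to 1$.

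First I would check that $x$, equivalently the real $0.x$, is left-computable; this is the routine bookkeeping step. Since $\Omega$ is left-computable, take a computable increasing sequence of dyadic rationals $q_s\nearrow\Omega$; applying the same ``insert zeros at the prescribed positions'' operation to the terminating binary expansions of the $q_s$ yields a computable sequence of dyadic rationals that is again increasing (inserting zeros at fixed positions preserves lexicographic, hence numeric, order on terminating expansions) and converges to $0.x$ (because $\Omega$ is irrational, the expansions of the $q_s$ converge digitwise to that of $\Omega$, hence the diluted expansions converge digitwise, hence numerically, to $x$). One should also note that $0.x$ has infinitely many $0$s and infinitely many $1$s in its expansion, so it is not dyadic; this makes the identification of the sequence $x$ with the real $0.x$ unambiguous.

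Next I would show that $x$ is not strongly Kurtz random. The key observation is that $x\restriction L_{k+1}$ is completely determined by the number $k$ together with the prefix $\Omega\restriction(L_{k+1}-C_k)$ of $\Omega$: just fill in the layout dictated by the block structure. Hence $K(x\restriction L_{k+1})\le K(\Omega\restriction(L_{k+1}-C_k))+K(k)+O(1)\le (L_{k+1}-C_k)+2\log(L_{k+1})+2\log k+O(1)=L_{k+1}-C_k+O(k)$, using only the trivial bound $K(\sigma)\le|\sigma|+2\log|\sigma|+O(1)$. Since $C_k=k(k+1)/2$ dominates the $O(k)$ overhead, the ``savings'' $C_k-O(k)$ tends to infinity, and a routine reparametrisation then supplies a computable $r\colon\IN\to\IN$ with $K(x\restriction r(n))\le r(n)-n$ for every $n$ (for instance $r(n):=L_{n+n_0+1}$ for a suitable constant $n_0$). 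So $x$ is left-computable and not strongly Kurtz random, whence by Theorem~\ref{theorem:main-eq} it is reordered computable.

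Finally I would show $\dim(x)=1$. From $x\restriction n$ and the number $n$ one can compute $\Omega\restriction m(n)$ by reading off the bits of $x\restriction n$ at the $\Omega$-positions, so $K(\Omega\restriction m(n))\le K(x\restriction n)+K(n)+O(1)$; combining this with $K(\Omega\restriction m(n))\ge m(n)-O(1)$ (Martin-L\"of randomness of $\Omega$) gives $K(x\restriction n)\ge m(n)-2\log n-O(1)$. Dividing by $n$ and letting $n\to\infty$, using $m(n)/n\to 1$ and $(\log n)/n\to 0$, we obtain $\liminf_{n}K(x\restriction n)/n\ge 1$, i.e.\ $\dim(x)=1$. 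The only delicate point is the balancing act in the two middle steps: the inserted zeros must be numerous enough that the compressibility of $x$ at the checkpoints $L_{k+1}$ beats the unavoidable logarithmic coding overhead (so $x$ fails to be strongly Kurtz random), yet sparse enough that $m(n)/n\to1$ (so $\dim(x)=1$); the choice of blocks of length $2^k$ with $k$ inserted zeros sits comfortably inside this window, but any attempt to let the blocks grow too slowly would break it.
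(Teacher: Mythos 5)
Your proof is correct, and the underlying construction is the same in spirit as the paper's: dilute $\Omega$ by forcing zeros at a sparse computable set of positions, so that the surviving $\Omega$-bits have density $1$ (giving $\dim(x)=1$) while the forced zeros make $x$ compressible enough to be reordered computable. The difference lies in how reordered computability is verified. The paper inserts a zero at every square position and then gets reordered computability for free from Corollary~\ref{kor:bi-immune}: the squares form an infinite computable subset of the complement of the $1$-positions, so $x$ is not bi-immune. You instead insert only $O((\log n)^2)$ zeros below $n$ and prove directly, via an explicit compression estimate at the checkpoints $L_{k+1}$, that $x$ is not strongly Kurtz random, then invoke Theorem~\ref{theorem:main-eq}. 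Your route buys a quantitatively stronger conclusion ($K(x\restriction n)\geq n-O((\log n)^2)$, so the dilution is nearly optimal), at the price of the delicate balancing you describe at the end. It is worth noting, though, that your own $x$ is also not bi-immune --- the set of inserted-zero positions is an infinite computable subset of $\overline{A}$ where $x=x_A$ --- so your entire middle paragraph could be replaced by a one-line appeal to Corollary~\ref{kor:bi-immune}, exactly as the paper does; the compression argument is only needed if one wants the sharper complexity bound.
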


\begin{proof}
Let $U$ be an optimal prefix-free machine. Then, the number $\Omega := \sum_{w \in \dom(U)}^{} 2^{-\left|w\right|}$ is left-computable and Martin-Löf random.
Hence, there exists a constant $c$ such that $K(\Omega \restriction n)> n-c$ for all $n$.
Now let $x$ be the real number whose binary expansion contains a zero at every position which is a square (where we start with position $1$ after the binary point).
The other positions are filled with the binary expansion of $\Omega$. So we have
\begin{equation*}
      x = 0. 0 \Omega(0) \Omega(1) 0 \Omega(2) \Omega(3) \Omega(4) \Omega(5) 0 \Omega(6) \dots,
\end{equation*}
which is a left-computable number. Since $x$ is not bi-immune, $x$ is reordered computable, according to Corollary~\ref{kor:bi-immune}. It is easy to verify that there exists a constant $c \in \IN$ with
\begin{equation*}
     K_U(x \restriction n) > n - \lfloor \sqrt{n} \rfloor - c
\end{equation*}
for every $n \in \IN$.
We obtain $\dim(x) = 1$.
\end{proof}

We conclude this section with another interesting example.

\begin{example}
	Let $x\in [0,1]$ be a left-computable number which is nearly computable but not computable (see Remark \ref{remark:lnc}). Then $x$ is strongly Kurtz random, hence not reordered computable\footnote{This fact was observed first by Janicki~\cite[Theorem 23]{Jan2024}, who gave a direct proof.}, according to Theorem~\ref{theorem:main-eq}. Using Corollary~\ref{kor:Dim<1=>roc}, we conclude that $\mathrm{Dim}(x)=1$. Furthermore, $x$ is weakly $1$-generic (see Remark \ref{remark:weakly-1-generic}). As Barmpalias and Vlek \cite[Theorem 11]{BV2011} showed that weakly $1$-generic reals are infinitely often $K$-trivial, we conclude that $\dim(x)=0$.
\end{example}

\section{Regular Numbers, Immunity, and Kurtz Randomness}
\label{section:regular}

In this section we have a short look at regular reals in the context of immunity properties, Kolmogorov complexity and (strong) Kurtz randomness.

\begin{definition}[{Wu \cite{Wu2005}}]
A real is called \emph{regular} if it can be written as a sum of finitely many strongly left-computable reals.
\end{definition}

Obviously, every regular real is left-computable.
Every regular real is reordered computable but the converse is not true~\cite{Jan2024}. It is stated in the proof of \cite[Proposition 17]{HJMS2024} that the Kolmogorov complexity of the initial segments of a regular real is at most logarithmic. For completeness sake, we add a proof of this statement.

\begin{proposition}[{\cite{HJMS2024}}]
\label{proposition:regular-Kolmogorovcomplexity}
Let $x$ be a regular real in $[0,1]$. 
Then there exists a constant $c \in \IN$ with $K(x \restriction n) \leq 2 \cdot \log(n) + 2 \cdot \log(\log(n)) + c$ for every $n \in \IN$.
\end{proposition}	

\begin{proof}
It is well-known that for every computably enumerable set $A \subseteq \IN$ there is some constant $c \in \IN$ with $K(A \restriction n) \leq 2 \cdot \log(n) + 2 \cdot \log(\log(n)) + c$ for every $n \in \IN$. 
The following is also known:
if $z\in\SigmaN$ is a real that is the sum of two left-computable reals $x,y\in\SigmaN$
then there exists a constant $c \in \IN$ with
\begin{equation*}
      K(z\restriction n) \leq \max\{ K(x \restriction n), K(y \restriction n) \}  + c
\end{equation*}
for every $n \in \IN$~\cite[Theorem 6.4]{DHNS2003}.
By applying this finitely many times we obtain the desired assertion.
\end{proof}

\begin{corollary}
	For every regular real $x\in [0,1]$ we have $\dim(x) = \mathrm{Dim}(x) = 0$.
\end{corollary}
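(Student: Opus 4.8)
The plan is to derive this directly from the preceding Corollary~\ref{corollary:regular-Kolmogorovcomplexity}, which provides a logarithmic upper bound on the Kolmogorov complexity of the initial segments of a regular real. Fix a regular real $x$ and a constant $c\in\IN$ with $K(x\restriction n)\leq 2\log(n)+2\log(\log(n))+c$ for all $n\in\IN$ (say for all $n\geq 2$, so that the iterated logarithm is defined; small values of $n$ do not affect the limits). Dividing through by $n$, I would observe that
\[
      0 \leq \frac{K(x\restriction n)}{n} \leq \frac{2\log(n)+2\log(\log(n))+c}{n}
\]
and that the right-hand side tends to $0$ as $n\to\infty$, since any fixed power of $\log n$ (indeed any expression of the form $a\log n + b\log\log n + c$) grows strictly slower than $n$.

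Consequently both $\dim(x)=\liminf_{n\to\infty}\frac{K(x\restriction n)}{n}$ and $\mathrm{Dim}(x)=\limsup_{n\to\infty}\frac{K(x\restriction n)}{n}$ are bounded above by $0$. Combined with the general inequality $0\leq\dim(x)\leq\mathrm{Dim}(x)\leq 1$ recorded just before Examples~\ref{examples:effdim}, this forces $\dim(x)=\mathrm{Dim}(x)=0$, as claimed. (Alternatively, one could invoke the third item of Examples~\ref{examples:effdim} together with the fact that $K(0^n)$ also grows only logarithmically, but going through Corollary~\ref{corollary:regular-Kolmogorovcomplexity} is the most direct route.)

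There is essentially no obstacle here: the only thing to be mildly careful about is the behavior at the finitely many small values of $n$ where $\log\log n$ is undefined or negative, but since $\dim$ and $\mathrm{Dim}$ are defined via $\liminf$ and $\limsup$ these finitely many terms are irrelevant. The entire content of the corollary is the asymptotic statement $\frac{\log n}{n}\to 0$.
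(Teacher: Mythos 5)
Your proof is correct and follows exactly the route the paper intends: the corollary is stated without an explicit proof precisely because it is the immediate consequence of Corollary~\ref{corollary:regular-Kolmogorovcomplexity} that you spell out, namely that $\frac{2\log(n)+2\log(\log(n))+c}{n}\to 0$ forces both the $\liminf$ and the $\limsup$ of $\frac{K(x\restriction n)}{n}$ to be $0$. Your remark about ignoring the finitely many small $n$ is a harmless but sensible addition.
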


By Theorem~\ref{theorem:Dim<1-notstronglyKurtzrandom} a regular real cannot be strongly Kurtz random. 
The question arises whether a regular real can at least be Kurtz random. We will see below that this is not the case either.

In~\cite{HJ2025} we, the authors, already looked at regular reals and immunity properties.
We proved the following two facts:
\begin{itemize}
\item
There exists a regular real in $[0,1]$ that is immune \cite[Corollary 2]{HJ2025}.
\item
However, a regular real in $[0,1]$ cannot be hyperimmune \cite[Theorem 3]{HJ2025}.
\end{itemize}
Can a regular real be bi-immune? The answer is no.

\begin{theorem}
\label{theorem:regular-not-biimmune}
Regular reals in $[0,1]$ are not bi-immune.
\end{theorem}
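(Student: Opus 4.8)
The plan is to show that a regular real $x$ cannot be bi-immune by showing that either its associated set $A$ (with $x_A = x$, viewing $x$ as a binary sequence) or its complement $\overline{A}$ fails to be immune, i.e.\ one of them contains an infinite computably enumerable subset. The key structural fact available is that a regular real is a sum of finitely many strongly left-computable reals, and by Corollary~\ref{corollary:regular-Kolmogorovcomplexity} the initial-segment Kolmogorov complexity of a regular real is only logarithmic; in particular $x$ is very far from Martin-L\"of random. So I would try to exploit this low complexity directly.

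First I would reduce to the case of a sum of two strongly left-computable reals, or argue directly with the logarithmic complexity bound. The cleanest route, I expect, is the following: write $x$ as a binary sequence and let $A = \{n : x(n) = 1\}$. If $A$ is finite then $x$ is dyadic and trivially not immune (its complement is cofinite, hence not immune); so assume $A$ is infinite. The goal is to produce an infinite c.e.\ subset of $A$ or of $\overline{A}$. Using the approximation: write $x$ as $x_1 + \dots + x_m$ with each $x_i$ strongly left-computable, so each $x_i = x_{B_i}$ for a c.e.\ set $B_i$, and fix computable increasing rational approximations $(x_{i,t})_t$ to each $x_i$ obtained by enumerating $B_i$. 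Then $z_t := \sum_i x_{i,t}$ is a computable increasing sequence of dyadics converging to $x$. The point is that at any position $n$, once all the $B_i$ have ``settled'' below $n$ up to the carries, the bit $x(n)$ is determined; and the number of carry events at position $n$ over the whole approximation is bounded because $\sum_i x_i$ is a sum of $m$ numbers each with a c.e.\ ``digit set''.

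More concretely, I would argue as follows. Since $\dim(x) = \mathrm{Dim}(x) = 0$, in particular $x$ is not Kurtz random, and every Kurtz random set is bi-immune (Jockusch; cited as \cite[Theorem 7.2.23]{DH2010} in the proof of Corollary~\ref{kor:bi-immune}). But that only gives: $x$ not Kurtz random does not immediately give $x$ not bi-immune --- the implication goes the wrong way. So instead I would use a more hands-on argument: from the logarithmic bound $K(x \restriction n) \le 2\log n + 2\log\log n + c$, the number of $n \le N$ at which $x \restriction n$ is ``describable with few bits'' is essentially all of them, and one can mimic the standard argument that a real of effective packing dimension zero --- indeed any real that is not ``complex'' --- has the property that the positions of its $1$'s (or $0$'s) cannot simultaneously form an immune set and a co-immune set. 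Concretely: consider the c.e.\ set $S$ of all $n$ such that there is $t$ with $z_t(n) = 1$ \emph{and} $z_{t'}(n) = 1$ for the least $t' > t$ with $z_{t'} \restriction n \ne z_t \restriction n$, or (if no carry ever passes position $n$ again) simply $n$ with $z_t(n)=1$ stabilized; dually define a c.e.\ set contained in $\overline{A}$. The finiteness of the carry count at each position $n$ --- bounded using that the total ``mass'' $\sum_i(x_i - x_{i,t})$ shrinks and each $B_i$ contributes each digit at most once --- is what makes one of these two sets infinite and genuinely a subset of $A$ resp.\ $\overline{A}$.

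The main obstacle I anticipate is the bookkeeping of carries: showing that for a sum of $m$ strongly left-computable reals the bit at each position $n$ changes only finitely often and that one can \emph{computably} detect a final value for infinitely many positions, in such a way that the detected values split into an infinite c.e.\ subset of $A$ or of $\overline{A}$. This is morally the same carry-counting analysis that appears in the $\impliedby$-direction of Theorem~\ref{theorem:main-eq} (where $|T| \le 2^{r(n)-n} + n - 1$ counts carries at position $r(n)$), so I would adapt that technique. An alternative, possibly shorter, approach --- which I would try first --- is: since $x = x_1 + \dots + x_m$ with $x_i$ strongly left-computable, and a strongly left-computable non-dyadic real $x_{B_i}$ has $B_i$ infinite and c.e., hence $B_i$ is an infinite c.e.\ set; if the $B_i$ are ``spread out'' one can hope that the digits of $x$ at a computably recognizable infinite set of positions agree with the digits of some $x_i$, forcing $A$ or $\overline A$ to contain an infinite c.e.\ set. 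Whichever of these goes through, the conclusion is that $A$ or $\overline{A}$ is not immune, so $x$ is not bi-immune.
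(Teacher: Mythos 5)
There is a genuine gap: what you have written is a collection of candidate strategies, none of which is carried to completion, and the one you lean on most heavily rests on a false premise. Low initial-segment complexity does \emph{not} imply failure of bi-immunity: there is no ``standard argument that a real of effective packing dimension zero \dots cannot simultaneously form an immune set and a co-immune set.'' One can build a bi-immune set by a $\emptyset'$-finite-extension argument that, for each infinite $W_e$, places one very large element of $W_e$ into $A$ and one into $\overline{A}$ and sets all other bits to $0$; choosing the witnesses to grow fast enough makes $A$ so sparse that $K(A\restriction n)=o(n)$ (indeed polylogarithmic), so $\mathrm{Dim}(x_A)=0$ while $A$ is bi-immune. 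Hence neither Corollary~\ref{corollary:regular-Kolmogorovcomplexity} nor the non-Kurtz-randomness of $x$ can, by itself, yield the theorem. Your carry-counting alternative also does not produce a proof as stated: the set $S$ you define is not a legitimate witness, because ``the bit at position $n$ has stabilized at $1$'' is not a c.e.\ condition (that set is exactly $A$, which is only $\Delta^0_2$), and the weaker c.e.\ condition ``the bit equals $1$ at two consecutive change-events'' does not prevent a later carry from flipping the bit, so $S$ need not be a subset of $A$ at all.

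The missing ingredient is structural, and it is exactly what the paper uses: by a theorem of Wu \cite{Wu05a}, every regular real equals $x_A$ for a set $A$ lying in the field of sets (finite Boolean combinations) generated by the computably enumerable sets, and by a theorem of Markwald \cite{Mar1956} no such set is bi-immune. Your carry analysis for $x=x_{B_1}+\dots+x_{B_m}$ is in fact pointing toward Wu's theorem --- the final bit at position $n$ is determined by $B_1(n),\dots,B_m(n)$ together with the final incoming carry, and the sets $\{n \mid \text{final carry into } n \ge j\}$ can be shown to be Boolean combinations of c.e.\ sets --- but you would then still need Markwald's combinatorial argument that a finite Boolean combination of c.e.\ sets always has an infinite c.e.\ subset on one side or the other. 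Neither of these two steps is present in your proposal, and without them none of your routes closes.
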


\begin{proof}
Wu~\cite[Theorem 2.1]{Wu2005} has shown that any regular real in $[0,1]$ is of the form $x_A$ for a set $A\subseteq\IN$ that is 
an element of the field of sets generated by the c.e.~sets.
It was shown by Markwald~\cite[Satz 6]{Mar1956}
that such sets cannot be bi-immune.
\end{proof}

\begin{corollary}
\label{corollary:regular-notKurtzrandom}
Regular reals in $[0,1]$ are not Kurtz random.
\end{corollary}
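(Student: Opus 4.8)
The plan is to obtain this as an immediate consequence of two facts that are already available: Theorem~\ref{theorem:regular-not-biimmune}, which says that regular reals are not bi-immune, and the classical observation of Jockusch that every Kurtz random binary sequence is bi-immune (see \cite[Theorem 7.2.23]{DH2010}), which was already used in the proof of Corollary~\ref{kor:bi-immune}. So the whole argument is a one-line contradiction once these are in place.

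Concretely, I would first recall the convention that a real $x$ is Kurtz random (respectively bi-immune) if there exists a Kurtz random (respectively bi-immune) set $A \subseteq \IN$ with $x_A = x$. Then I would argue by contradiction: suppose some regular real $x$ is Kurtz random, say $x = x_A$ with $A$ Kurtz random. By Jockusch's theorem the set $A$ is bi-immune, so $x$ is a bi-immune real, contradicting Theorem~\ref{theorem:regular-not-biimmune}. Hence no regular real is Kurtz random. One could also phrase this slightly more abstractly as ``every Kurtz random real is bi-immune, but no regular real is bi-immune'', which makes the logical structure transparent.

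I do not expect any real obstacle here, since the substantive work has already been carried out: in Theorem~\ref{theorem:regular-not-biimmune} (via Wu's representation of a regular real as $x_A$ for a set $A$ in the field of sets generated by the c.e.\ sets, and Markwald's theorem that such sets are not bi-immune) and in the cited bi-immunity of Kurtz random sequences. The only minor point worth a remark is the existential quantifier in the definition of a Kurtz random real and the two-expansions ambiguity for dyadic reals; but this causes no trouble, because a Kurtz random binary sequence is never dyadic (indeed not even rational), and a dyadic real is trivially not bi-immune, so the contradiction goes through unambiguously. Optionally I would close by noting that this strengthens the earlier observation that regular reals are not strongly Kurtz random.
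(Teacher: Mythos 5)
Your proposal is correct and follows exactly the paper's own argument: the paper also deduces the corollary from Theorem~\ref{theorem:regular-not-biimmune} together with Jockusch's fact that Kurtz random reals are bi-immune, as cited in the proof of Corollary~\ref{kor:bi-immune}. Your extra remarks on the existential quantifier and dyadic reals are harmless elaborations of the same one-line argument.
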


\begin{proof}
This follows from Theorem~\ref{theorem:regular-not-biimmune}
and from the fact that Kurtz random reals are bi-immune 
(this is due to Jockusch; see \cite[Theorem 7.2.23]{DH2010}).
\end{proof}

Barmpalias and Vlek~\cite[Prop.~2.2]{BV2011} have shown that every strongly left-computable number in $[0,1]$ is infinitely often $K$-trivial. The following result strengthens this observation and complements Proposition~\ref{proposition:regular-Kolmogorovcomplexity}.

\begin{theorem}
Every regular real in $[0,1]$ is infinitely often $K$-trivial. 
\end{theorem}

\begin{proof}
Let us consider an $x\in\Sigma^\IN$ such that $0.x$ is a regular real.
We can assume without loss of generality $0.x>0$.
Then there exist a constant $k\in\IN$ and a computable name $f$ of $0.x$ such that $u_f(n)\leq k$ for all $n\in\IN$;
compare \cite[Lemma 9.4]{HHJ2024}.
We define $x_t:=\sum_{i=0}^{t-1} 2^{-f(i)}$. 
The increasing sequence of \emph{true stages} $(s_m)_m$ for $f$ is defined as usual:
\begin{itemize}
\item
Let $m_0:=\min f(\IN)$ and
$s_0:=\max\{t\in\IN \mid f(t)=m_0\}$.
\item
Once $s_0,\ldots,s_n$ are defined, let $m_{n+1}:=\min f(\{s(n)+1,s(n)+2,\ldots\})$
and $s_{n+1} := \max\{t\in\IN \mid f(t)=m_{n+1}\}$.
\end{itemize}
Then $f(t) > f(s_n)$ for all $n\in\IN$ and all $t > s_n$.
Let $\ell := \limsup_n u_f(f(s_n))$. Of course, $1\leq\ell\leq k$.
The set $B:=\{f(s_n) \mid u_f(f(s_n)) = \ell\}$ is an infinite set.
We claim that there exists a constant $d\in\IN$ such that
$K(x \restriction m) \leq K(0^m) + d$ for all $m\in B$.

Remember that by $U$ we denote some fixed optimal prefix-free machine, and that we write $K(\tau)$ for $K_U(\tau)$. 
We wish to define a machine $M$ with prefix-free domain. For $\sigma \in \SigmaS$,
we define $M(\sigma)$ as follows:
\begin{itemize}
\item
If $U(\sigma)$ is not defined or $U(\sigma)\not\in\{0^m \mid m\in\IN\}$ 
then we leave $M(\sigma)$ undefined.
\item
If $U(\sigma)=0^m$ for some $m\in\IN$ then we search for a number $t$ with $f(t)=m$
and such that there are exactly $\ell$ numbers $r$ with $0\leq r\leq t$ and $f(r)=m$.
If there is no such $t$ then we leave $M(\sigma)$ undefined.
\item
But if there is such a $t$ (there can be at most one) then we let $M(\sigma)$
be the string consisting of the first $m$ bits after the binary point of the binary representation of $x_{t+1}$.
\end{itemize}
It is clear that the domain of definition of $M$ is a subset of the domain of definition of $U$.
Hence, $M$ has prefix-free domain.
As $U$ is optimal, there exists a constant $d_1\in\IN$ such that
$K_U(\tau)\leq K_M(\tau) + d_1$ for all $\tau\in\Sigma^*$.
Let us consider a number $m\in B$.
Let us fix a shortest string $\sigma$ with $U(\sigma)=0^m$.
Let $n$ be the unique number with $f(s_n)=m$. 
As $M(\sigma)$ is obtained from $x_{s_n+1}$ by keeping only the first
$m$ digits after the binary point we see 
$\left| 0.M(\sigma) - x_{s_n+1} \right| \leq 2^{-m}$.
Similarly, $|0.(x\restriction m) - x| \leq 2^{-m}$.
As $f(t)>f(s_n)=m$ for all $t>s_n$ we obtain
\[ \left| x - x_{s_n+1} \right| \leq k\cdot 2^{-m}. \]
Together, these estimates give us:
\[ \left| 0.M(\sigma) - 0.(x\restriction m) \right| \leq (k+2) \cdot 2^{-m} . \]
By~\cite[Lemma 4.8]{CHKW2001} there exists a constant $d_2\in\IN$
such that, for all $\ell\in\IN$ and all $\rho,\tau\in\Sigma^\ell$
with $\left| 0.\rho - 0.\tau \right| \leq (k+2)\cdot 2^{-\ell}$ we have
$\left| K(\sigma) - K(\tau) \right| \leq d_2$.
This implies
\[ \left| K(M(\sigma)) - K( x\restriction m ) \right| \leq d_2 . \]
By combining these estimates, we conclude
\begin{eqnarray*}
  K(x \restriction m) 
  &\leq & K(M(\sigma)) + d_2
	\leq K_M(M(\sigma)) + d_1 + d_2 \\ 
	&\leq & |\sigma| + d_1 + d_2 
	= K(0^m) + d_1 + d_2. 
\end{eqnarray*}
This ends the proof.
\end{proof}

\bibliography{rrc}
\bibliographystyle{abbrv}

\end{document}